\newtheorem{theorem}{Theorem}[section]
\newtheorem{lemma}[theorem]{Lemma}
\newtheorem{proposition}[theorem]{Proposition}
\newtheorem{corollary}[theorem]{Corollary}
\newtheorem{remark}[theorem]{Remark}
\newtheorem{example}[theorem]{Example}
\newtheorem{definition}[theorem]{Definition}
\newenvironment{equationth}{\stepcounter{theorem}\begin{equation}}{\end{equation}}
\def\C{ \mathbb{C}}
\def\R{ \mathbb{R}}
\def\N{ \mathbb{N}}
\def\S{ \mathbb{S}}
\def\codim{ {\rm codim}}
\def\rang{ {\rm rank}}
\def\corang{ {\rm corank}}
\def\Reg{ {\rm Reg}}
\def\rond{\mathaccent"7017}
\begin{document}

\title[]{\small ON A SINGULAR VARIETY ASSOCIATED TO A POLYNOMIAL MAPPING}
\makeatother

\author[Nguyen Thi Bich Thuy, Anna Valette and Guillaume Valette]{Nguyen Thi Bich Thuy, Anna Valette and Guillaume Valette}

\address[Nguyen Thi Bich Thuy]{I.M.L., Aix-Marseille University,  Case 907, Luminy 13288-Marseille Cedex 9, France}
\email{ntbthuy@ctu.edu.vn}
\address[Anna Valette]{Instytut Matematyki Uniwersytetu
Jagiello\'nskiego, ul. S \L ojasiewicza, Krak\'ow, Poland}
\email{anna.valette@im.uj.edu.pl}
\address[Guillaume Valette]
{Instytut Matematyczny PAN, ul. \'Sw. Tomasza 30, 31-027 Krak\'ow,
Poland} \email{gvalette@impan.pl}


\maketitle \thispagestyle{empty}

\begin{abstract} In the paper "{\it Geometry of polynomial mapping at infinity via intersection homology}" the second and third authors associated to a given polynomial mapping $F : \C^2 \to \C^2$ with nonvanishing jacobian a variety whose homology or intersection homology describes the geometry of singularities at infinity of the mapping. We generalize this result.
\end{abstract}

\section{INTRODUCTION}
In 1939, O. H. Keller \cite{kel} stated the famous conjecture which is called the Jacobian conjecture: any polynomial mapping $F: \C^n\to\C^n$ with nowhere vanishing Jacobian is a polynomial automorphism. The problem remains open today even for dimension 2.  We call the smallest set $S_F$ such that the mapping $F:X\setminus F^{-1}(S_F)\to Y\setminus S_F $ is proper, the asymptotic set of $F$. The Jacobian conjecture  reduces to show that the asymptotic set of  a complex  polynomial  mapping with nonzero constant Jacobian is empty.  So the set of points at which a polynomial mapping fails to be proper plays an important role.

The second and third authors gave in \cite{Valette}  a new approach to study the Jacobian conjecture in the case of dimension 2: they constructed a real pseudomanifold denoted $N_F \subset \R^{\nu}$, where $\nu > 2n$, associated to a given polynomial mapping $F : \C^n \to \C^n$, such that 
the singular part of the variety $N_F$ is contained in $(S_F \times K_0(F)) \times \{ 0_{\R^{\nu -2n}}\}$ where $K_0(F)$ is the set of critical values of $F$. 
 In the case of dimension 2, the homology or intersection homology of $N_F$ describes the geometry of the  singularities at infinity of 
 the mapping $F$.

Our aim is to improve this result 
in the general case of dimension $n >2$ and compute the intersection homology of the associated pseudomanifold $N_F$. 
Let $\hat {F_i}$ be the leading forms of the components $F_i$ of the polynomial mapping $F = (F_1, \ldots, F_n) : \C^n \rightarrow \C^n$. 
We show (Theorem \ref{valettethuy2}) that if the  polynomial mapping $F : \C^n \rightarrow \C^n$  has nowhere vanishing Jacobian and if {$rank {(D \hat {F_i})}_{i=1,\ldots,n} > n-2$},  then the condition of properness  of $F$ 
is equivalent to the condition of vanishing  homology or intersection homology of $N_F$. Moreover, it 
is  indeed more precise to compute   intersection homology than homology. 
In order to compute the intersection homology of the variety $N_F$, we show that it admits a stratification which is  locally topologically trivial along the strata. The main feature of intersection homology is to satisfy Poincar\'e duality that is more interesting in the case where the stratification has no stratum of odd real dimension. We show that the variety $N_F$ 
admits a Whitney stratification with only even dimensional strata.  It is well known that Whitney stratification are locally topologically trivial along the strata. 


\subsection*{Acknowledgment}  It is our pleasure to thank J.-P. Brasselet for his interest and encouragements. 

\section{PRELIMINARIES AND BASIC DEFINITION}
In this section we set-up  our framework. All the considered sets in this article are semi-algebraic.

\subsection{Notations and conventions.}\label{section_notations}
Given a topological space $A$, singular simplices of $A$ will be semi-algebraic continuous mappings $\sigma:T_i \to A$, 
where $T_i$ is the standard $i$-simplex in $\R^{i+1}$. 
Given a subset $X$ of $\R^n$ we denote by $C_i(X)$ the group of
$i$-dimensional singular chains (linear combinations of singular simplices
with coefficients in $\R$); if
$c$ is an element of $C_i(X)$, we denote by $|c|$ its support.
By $Reg(X)$ and $Sing(X)$ we denote respectively the regular and singular locus of the set $X$. Given $A \subset \R^n$, $\overline{A}$ will stand for the topological closure of $A$. Given a point $x\in \R^n$ and $\alpha>0$, we write $\mathbb{B}(x,\alpha)$ for the ball of radius $\alpha$ centered at $x$ and $\S(x,\alpha)$ for the corresponding sphere, boundary of $\mathbb{B}(x,\alpha)$.

\subsection{Intersection homology.}
We briefly recall the definition of intersection homology; for details,
we refer  to the fundamental work of M. Goresky and R. MacPherson
\cite{GM1} (see also \cite{Jean Paul}).

\begin{definition} 
{\rm Let $X$ be a $m$-dimensional semi-algebraic set.  A {\bf semi-algebraic stratification of $X$} is the data of a finite semi-algebraic filtration 
\begin{equationth} \label{dfn_stratification}
X = X_{m} \supset X_{m-1} \supset \cdots \supset X_0 \supset X_{-1} = \emptyset
\end{equationth}
 such that  for every $i$,  the set $S_i = X_i\setminus X_{i-1}$ is either empty or a topological manifold of dimension $i$. A connected component of $S_i$ is  called   {\bf a stratum} of $X$.}
\end{definition}

\begin{definition} [\cite{Whitney}]
{\rm One says that the {\bf Whitney $(b)$ condition} is realized for a stratification
 if for each pair of strata $(S, S')$ 
 and for any $y \in S$ one has: Let $\{ x_n\}$ be a sequence of points in $S'$ with limit $y$ 
and let $\{ y_n \}$ be a sequence of points in $S$ tending to $y$, 
assume that the sequence of tangent spaces $\{ T_{x_n}S' \}$ admits a limit $T$ for $n$ tending to $+ \infty$ (in a suitable Grassmanian manifold)
and that the sequence of directions $x_ny _n$ admits a limit $\lambda$ for $n$ tending to $+ \infty$ (in the corresponding projective manifold), then $\lambda \in T$. 
}
\end{definition}

We denote by $cL$  the open cone on the space $L$, the cone on the empty set being a point. Observe that if $L$ is a stratified set then $cL$ is stratified by  the cones over the strata of $L$ and a $0$-dimensional stratum (the vertex of the cone). 

\begin{definition}
{\rm A stratification of $X$ is said to be {\bf locally topologically trivial} if for every $x \in X_i\setminus X_{i-1}$, $i \ge 0$, there is an open neighborhood $U_x$  of $x$ in $X$, a stratified set $L$ and a semi-algebraic homeomorphism 
 $$h:U_x \to (0;1)^i \times  cL,$$   such  that
 $h$ maps the strata of $U_x$ (induced stratification) onto the strata of  $  (0;1)^i \times cL$  (product stratification).}
\end{definition}

\begin{definition} \label{del pseudomanifolds}
{\rm A  {\bf pseudomanifold} in $\R^n$ is a  subset $X \subset \R^n$ whose singular locus
is of codimension at least 2 in $X$ and whose regular locus is dense in $X$.

A {\bf stratified pseudomanifold} (of dimension $m$) is the data of an $m$-dimensional pseudomanifold $X$ together with  a semi-algebraic filtration: $$ X = X_m \supset X_{m -1} \supset X_{m -2} \supset \cdots  \supset X_0 \supset X_{-1} = \emptyset,$$  which constitutes a locally topologically trivial stratification of $X$.}
\end{definition}

\begin{definition} \label{dfn_pseudo_a_bord}
{\rm A  {\bf stratified  pseudomanifold with boundary} is a semi-algebraic couple 
$(X,\partial X)$ together with a semi-algebraic  filtration $$ X = X_m \supset X_{m -1} \supset X_{m -2} \supset \cdots  \supset X_0 \supset X_{-1} = \emptyset,$$ 
such that:
\begin{enumerate}
\item  $X\setminus \partial X$ is an  $m$-dimensional stratified pseudomanifold (with the filtration $X_{j}\setminus \partial X$),
\item  $\partial X$ is a  stratified pseudomanifold (with the filtration $X_j':=X_{j+1}\cap \partial X$),
\item $\partial X$ has a  {\bf stratified collared neighborhood}: there exist a neighborhood $U$ of $\partial X$ in $X$ and a semi-algebraic homeomorphism $\phi:  \partial X \times [0,1]\to U $ such that $\phi(X_{j-1}'\times [0,1])=U \cap X_j$ and $\phi(\partial X \times \{0\})=\partial X$.
\end{enumerate}
}
\end{definition}

\begin{definition}
{\rm 
A {\bf perversity} is an $(m-1)$-uple  of integers $\bar p = (p_2,
p_3,\dots , p_m)$ such that $p_2 = 0$ and $p_{k+1}\in\{p_k, p_k + 1\}$.

Traditionally we denote the zero perversity by
$\overline{0}=(0,\dots,0)$, the maximal perversity by
$\overline{t}=(0,1,\dots,m-2)$, and the middle perversities by
$\overline{m}=(0,0,1,1,\dots, [\frac{m-2}{2}])$ (lower middle) and
$\overline{n}=(0,1,1,2,2,\dots ,[\frac{m-1}{2}])$ (upper middle). We say that the
perversities $\overline{p}$ and $\overline{q}$ are {\bf complementary} if $\overline{p}+\overline{q}=\overline{t}$.

Given a stratified pseudomanifold $X$, we say that a semi-algebraic subset $Y\subset X$ is  $(\bar
p, i)$-{\bf allowable} if  $\dim (Y \cap X_{m-k}) \leq i - k + p_k$ for
all $k \geq 2$.

In particular,  a subset $Y\subset X$ is  $(\overline{t},i)$-allowable if $\dim( Y \cap Sing(X)) <i-1$.

Define $IC_i ^{\overline{p}}(X)$ to be the $\R$-vector subspace of $C_i(X)$
consisting of those chains $\xi$ such that $|\xi|$ is
$(\overline{p}, i)$-allowable and $|\partial \xi|$ is
$(\overline{p}, i - 1)$-allowable.}
\end{definition}

\begin{definition} 
{\rm The {\bf $i^{th}$ intersection homology group with perversity $\overline{p}$}, denoted by
$IH_i ^{\overline{p}}(X)$, is the $i^{th}$ homology group of the
chain complex $IC^{\overline{p}}_*(X).$
}
\end{definition}

 Goresky and MacPherson proved that these groups are independent of the
choice of the stratification and are finitely generated \cite{GM1, GM2}.

 \begin{theorem}[Goresky,
MacPherson \cite{GM1}] {\it For any orientable compact stratified
semi-algebraic $m$-dimensional pseudomanifold  $X$,  generalized Poincar\'e duality holds:
\begin{equation}\label{eq poincare duality} IH_k ^{\overline{p}}(X)  \simeq IH_{m-k} ^{\overline{q}} (X), 
\end{equation}
where $\overline{p}$ and $\overline{q}$ are complementary perversities.}
\end{theorem}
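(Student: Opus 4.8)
The plan is to prove the isomorphism by exhibiting a nondegenerate \emph{intersection pairing}
\[
IH_k^{\overline{p}}(X) \times IH_{m-k}^{\overline{q}}(X) \longrightarrow \R ,
\]
and then using that a nondegenerate pairing between finitely generated real vector spaces — both groups being finitely generated by the cited result of Goresky and MacPherson — identifies each space with the dual of the other, forcing $IH_k^{\overline{p}}(X)\cong IH_{m-k}^{\overline{q}}(X)$ abstractly. Working over the field $\R$, as in the present paper, there is no torsion to track, so nondegeneracy is exactly what is needed.

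First I would fix a semi-algebraic triangulation of $X$ compatible with the filtration and recall the stratified general position machinery: after iterated barycentric subdivision, an intersection cycle representing a given class can be moved, within its class, so that it meets a second fixed cycle transversally inside every stratum. The key numerology is that if $|\xi|$ is $(\overline{p},i)$-allowable and $|\eta|$ is $(\overline{q},m-i)$-allowable, then inside the manifold $S_{m-k}=X_{m-k}\setminus X_{m-k-1}$ the expected dimension of $|\xi|\cap|\eta|$ is
\[
(i-k+p_k)+\bigl((m-i)-k+q_k\bigr)-(m-k)=p_k+q_k-k=-2<0 ,
\]
since $\overline{p}+\overline{q}=\overline{t}$ gives $p_k+q_k=k-2$ for all $k\ge 2$. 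Hence in general position the two supports avoid $Sing(X)$ entirely and meet transversally in $Reg(X)$ in a compact $0$-manifold; counting those points with signs (using the orientation of $Reg(X)$) defines $\xi\cdot\eta\in\R$. One then checks this descends to homology: if $\xi=\partial c$ for an allowable $(i+1)$-chain $c$, then $c\cap\eta$ is an allowable $1$-chain whose signed boundary count equals $\xi\cdot\eta$, so $\xi\cdot\eta=0$; the symmetric argument handles the other variable. This yields the pairing, equivalently a homomorphism $IH_k^{\overline{p}}(X)\to\mathrm{Hom}_\R\bigl(IH_{m-k}^{\overline{q}}(X),\R\bigr)$.

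The substantial part — the actual content of the theorem — is nondegeneracy of this pairing. Here I would argue by induction on $\dim X$ and on the number of strata, combining a Mayer–Vietoris sequence for intersection homology with the local cone formula: for a compact $(m-1)$-dimensional stratified pseudomanifold $L$,
\[
IH_k^{\overline{p}}(cL)\cong
\begin{cases}
IH_k^{\overline{p}}(L), & k<m-1-p_m,\\[2pt]
0, & k\ge m-1-p_m.
\end{cases}
\]
The cone formula for $\overline{p}$ and the corresponding formula for $\overline{q}$ are dual to one another in the relevant range of degrees, so the local intersection pairing in a distinguished neighborhood $(0,1)^{m-k}\times cL$ of a point of the codimension-$k$ stratum is nondegenerate precisely because, by the inductive hypothesis, the pairing on the lower-dimensional link $L$ is. A Mayer–Vietoris induction and the five lemma then globalize local nondegeneracy to nondegeneracy on all of $X$; compactness guarantees the pairing takes values in $\R$ with no boundary terms escaping, and orientability makes the signs globally coherent. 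The base case, where $X$ has a single stratum, is classical Poincaré duality for oriented closed manifolds.

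I expect the main obstacle to lie in making the stratified transversality arguments fully rigorous in the semi-algebraic category — one must perturb allowable chains within a homology class so that they meet transversally stratum by stratum without destroying $(\overline{p},\cdot\,)$- or $(\overline{q},\cdot\,)$-allowability — together with the bookkeeping in the inductive step, where one must verify that the cone formulas for $\overline{p}$ and $\overline{q}$ match up correctly with the Mayer–Vietoris sequences. An alternative route that bypasses this combinatorics is the sheaf-theoretic proof: identify $IH_*^{\overline{p}}(X)$ with the hypercohomology of the Deligne intersection complex $\mathbf{IC}^{\bullet}_{\overline{p}}$, note that this complex is characterized by a short list of axioms, and invoke Verdier duality, which exchanges $\mathbf{IC}^{\bullet}_{\overline{p}}$ and $\mathbf{IC}^{\bullet}_{\overline{q}}$ up to the appropriate degree shift on an oriented pseudomanifold; this produces \eqref{eq poincare duality} at once, at the cost of setting up the derived-category formalism.
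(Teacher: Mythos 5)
This statement is quoted in the paper as a known result of Goresky and MacPherson (cited from [GM1]); the paper gives no proof of it, so there is no internal argument to compare your sketch against. What can be assessed is whether your sketch accurately reconstructs the standard proofs, and it does. The dimension count $p_k+q_k-k=-2$ for complementary perversities is exactly the reason the pairing lives on the regular part, and the cone formula you quote, with cutoff $k<m-1-p_m$ for the cone on a compact $(m-1)$-dimensional link, is the correct one. Your two routes — the piecewise-linear intersection pairing plus stratified general position, and Verdier duality for the Deligne complex — are indeed the two classical proofs, corresponding roughly to [GM2] and [GM1] respectively.

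Two honest caveats, both of which you flag yourself. First, stratified general position for allowable chains is the genuinely hard technical input in the simplicial approach; asserting that one can perturb within the homology class so as to meet the second chain transversally stratum-by-stratum, while preserving allowability, is where the real work is, and your sketch does not supply it. Second, the Mayer--Vietoris-plus-cone-formula induction you invoke to globalize local nondegeneracy is closer in spirit to the sheaf-theoretic bookkeeping (axiomatic characterization of $\mathbf{IC}^{\bullet}_{\overline{p}}$) than to the explicit chain-level argument of [GM2]; mixing the two is fine as a sketch but would need care to make rigorous, in particular because a chain-level Mayer--Vietoris for intersection chains requires subdivisions compatible with the stratification. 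Neither is a fatal gap for a sketch — you correctly identify them as the obstacles — but a full proof along your lines would have to discharge both.
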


In the non-compact case the above isomorphism holds
for Borel-Moore homology:
\begin{equation}\label{eq poincare duality} IH_k ^{\overline{p}}(X)  \simeq IH_{m-k,BM} ^{\overline{q}} (X), \end{equation}
where $IH_{*,BM}$ denotes the intersection homology with
respect to Borel-Moore chains \cite{GM2, Jean Paul}. A relative version is also true in the case where $X$ has boundary.

\medskip 

\begin{proposition}[Topological invariance, \cite{GM1, GM2}] \label{proinvariance}
{\it Let X be a locally compact stratified pseudomanifold and $\overline{p}$ a perversity, then the intersection homology groups $IH_*^{\overline{p}}(X)$ and $IH_{*,BM} ^{\overline{p}} (X)$ do not depend on the stratification of $X$.}
\end{proposition}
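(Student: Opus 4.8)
\emph{Proof strategy.} The plan is to follow the sheaf-theoretic argument of Goresky and MacPherson in \cite{GM2}; the earlier proof in \cite{GM1} is by piecewise-linear topology and proceeds through a direct chain-level subdivision, but the derived-category approach is the one that makes the stratification-independence transparent. The first step is to promote the complexes $IC^{\bar p}_\ast$ to a complex of sheaves on $X$: for an open set $U\subseteq X$ one takes the complex of locally finite (Borel--Moore) allowable chains on $U$ and sheafifies the presheaf $U\mapsto IC^{\bar p}_{n-\ast,BM}(U)$, with $n=\dim X$, to obtain an object $\mathcal{IC}^{\bar p}_X$ of the bounded constructible derived category $D^b_c(X)$. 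Its Borel--Moore hypercohomology computes $IH^{\bar p}_{\ast,BM}(X)$ and its compactly supported hypercohomology computes $IH^{\bar p}_\ast(X)$, so it suffices to prove that the quasi-isomorphism class of $\mathcal{IC}^{\bar p}_X$ in $D^b_c(X)$ does not depend on the filtration.

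Next I would establish the Deligne axiomatic characterization. Writing $U_k:=X\setminus X_{n-k}$ and letting $i_k:U_k\hookrightarrow U_{k+1}$ be the open inclusions attached to the filtration, one shows that $\mathcal{IC}^{\bar p}_X$ is, up to quasi-isomorphism, the unique $\mathcal{S}^\bullet\in D^b_c(X)$ satisfying: (i) $\mathcal{S}^\bullet|_{U_2}\simeq\R_{U_2}[n]$; (ii) a support condition bounding the dimension of the support of $\mathcal H^q(\mathcal{S}^\bullet)$ in terms of $q$ and $\bar p$; and (iii) the cosupport (attaching) condition asserting that $\mathcal{S}^\bullet|_{U_{k+1}}\to Ri_{k\ast}(\mathcal{S}^\bullet|_{U_k})$ induces isomorphisms on $\mathcal H^q$ below a threshold depending on $k$ and $p_k$. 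Existence is the Deligne construction $\mathcal{S}^\bullet=\tau_{\le p_n}Ri_{n\ast}\cdots\tau_{\le p_2}Ri_{2\ast}\,\R_{U_2}[n]$, and the verification that $\mathcal{IC}^{\bar p}_X$ itself satisfies (i)--(iii) is a local computation: since the stratification is locally topologically trivial, a neighbourhood of a point of $X_{n-k}\setminus X_{n-k-1}$ is $(0,1)^{n-k}\times cL$, and one invokes the cone formula, which says that $IH^{\bar p}_\ast(cL)$ agrees with $IH^{\bar p}_\ast(L)$ in low degrees and vanishes in high degrees, the cut-off prescribed by $\bar p$ and $\dim L$. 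Uniqueness is then pure homological algebra: an induction on $k$ in which the cosupport condition, together with the universal property of the truncation functor $\tau_{\le}$, forces $\mathcal{S}^\bullet|_{U_{k+1}}$ to coincide with $\tau_{\le p_k}Ri_{k\ast}(\mathcal{S}^\bullet|_{U_k})$. The point to notice is that conditions (i)--(iii) refer to the filtration only through the codimensions of the strata.

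It remains to compare two stratifications, and this is the only genuinely stratification-dependent step. In the semi-algebraic category any two semi-algebraic stratifications of $X$ admit a common semi-algebraic refinement, so one may assume a stratification $\mathcal{S}'$ refines $\mathcal{S}$, and it suffices to show that the Deligne sheaf $\mathcal{IC}^{\bar p}_{X,\mathcal{S}}$ built from $\mathcal{S}$ also satisfies axioms (i)--(iii) for $\mathcal{S}'$: by the uniqueness just established (applied to $\mathcal{S}'$) this yields $\mathcal{IC}^{\bar p}_{X,\mathcal{S}}\simeq\mathcal{IC}^{\bar p}_{X,\mathcal{S}'}$ in $D^b_c(X)$, hence equality of all the intersection homology groups, Borel--Moore or not. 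The support condition is immediate since refining a stratum does not enlarge a cohomology support. The attaching condition at a stratum $S'$ of $\mathcal{S}'$ contained in a stratum $S$ of $\mathcal{S}$ is the crux: along $S$ the complex $\mathcal{IC}^{\bar p}_{X,\mathcal{S}}$ is cohomologically locally constant, being locally a product of a constant sheaf along $S$ with the intersection complex of the normal cone, so subdividing $S$ produces no new local cohomology and the attaching map stays an isomorphism in the required range. I expect this verification, resting on the local product structure of a locally topologically trivial stratified pseudomanifold and on the cone formula, to be the main obstacle; the remainder is formal manipulation in the constructible derived category. (For topological pseudomanifolds, where a common refinement need not exist, one instead compares every stratification to the coarsest, intrinsic filtration, which each stratification refines, and runs the same argument.)
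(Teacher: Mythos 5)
The paper does not prove this Proposition: it is recalled with a citation to \cite{GM1,GM2} and used as a black box throughout the rest of the article, so there is no proof of the paper's own to compare against. Your sketch reconstructs the sheaf-theoretic argument of \cite{GM2} --- sheafify the locally finite intersection chain complex, characterize the resulting object $\mathcal{IC}^{\bar p}_X$ in $D^b_c(X)$ by the Deligne axioms, and read off both $IH^{\bar p}_*$ and $IH^{\bar p}_{*,BM}$ as compactly supported and ordinary hypercohomology of that single object --- and the scaffolding is sound. Two remarks on where you diverge from, and where you fall short of, the original. First, Goresky and MacPherson do not pass to a common refinement; they instead rewrite the axioms in a stratification-free form (their [AX2]), in which the support and cosupport bounds are phrased directly in terms of the dimensions of the cohomology supports rather than in terms of a chosen filtration, and invariance then follows at once from the uniqueness clause. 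The common-refinement route you chose is closer to the exposition in Borel's \emph{Intersection Cohomology}, and it is legitimate in the semi-algebraic setting of this paper since common semi-algebraic Whitney refinements always exist. Second, the assertion that the attaching condition ``stays an isomorphism in the required range'' after refining a stratum $S$ to a piece $S'\subset S$, on the sole grounds that $\mathcal{IC}^{\bar p}_{X,\mathcal S}$ is cohomologically locally constant along $S$, is the one place where a genuine verification is being skipped: if $\mathrm{codim}\,S' = k' > k = \mathrm{codim}\,S$, the attaching axiom at $S'$ demands an isomorphism through degree $p_{k'}-n$, which is strictly larger than the range $p_k-n$ controlled at $S$. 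Closing this gap requires the cone formula together with the growth constraint $p_{k+1}\in\{p_k,p_k+1\}$ on perversities, which guarantees that the Deligne truncation at codimension $k$ already kills everything in the extra degrees that appear at codimension $k'$; this is precisely the point at which topological invariance fails for perversities not satisfying that constraint, so it deserves to be made explicit rather than folded into ``the remainder is formal.''
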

\subsection{$\mathcal{L}^\infty$ cohomology}
 Let $M\subset \R^n$ be a smooth submanifold.

\medskip

\begin{definition}  {\rm We say that a differential form  $\omega$ on $M$
is $\mathcal{L}^\infty$ if there exists a constant $K$ such that  for any $x
\in M$: $$|\omega(x)| \leq K. $$
 We denote by $\Omega^j_\infty
(M)$ the cochain complex constituted by all the $j$-forms
$\omega$ such that $\omega $ and $d\omega$ are both $\mathcal{L}^\infty$. The cohomology groups of this cochain complex are called the {\bf
$\mathcal{L}^\infty$-cohomology groups of $M$} and will be denoted by
$H^* _\infty(M)$.}
\end{definition}

The third author showed that   the $\mathcal{L}^\infty$
cohomology of the differential forms defined on the regular part of a pseudomanifold $X$ coincides with the intersection
cohomology of $X$ in the maximal perversity (\cite{Valette2}, Theorem 1.2.2):
\begin{theorem}\label{thm_intro_linfty} {\it Let $X$ be a compact subanalytic pseudomanifold (possibly with boundary). Then, for any $j$:
$$H_\infty ^j(Reg(X)) \simeq IH_j^{\bar t} (X).$$
Furthermore, the isomorphism is induced by the natural mapping
provided by integration on allowable simplices.}
\end{theorem}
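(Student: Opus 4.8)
The plan is to build the isomorphism out of the integration pairing and to prove bijectivity by localizing the problem to a conical neighbourhood of each stratum, the decisive point being a cone-type computation in $\mathcal{L}^\infty$ cohomology.

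First I would make the comparison morphism precise. Given $\omega\in\Omega^j_\infty(Reg(X))$ and a $(\overline t,j)$-allowable singular $j$-simplex $\sigma$, the set $|\sigma|\cap Sing(X)$ has dimension $<j-1$, hence is $j$-null, and $\int_\sigma\omega:=\int_{\sigma^{-1}(Reg(X))}\sigma^*\omega$ is defined; it is finite because $|\sigma|$ is a compact semi-algebraic set of finite $j$-volume and $|\omega|$ is bounded. The same dimension estimate for $|\partial\sigma|$ together with the boundedness of $\omega$ and of $d\omega$ yields Stokes' formula for $\sigma$: remove the $\varepsilon$-neighbourhood of $Sing(X)$, apply the smooth Stokes theorem, and let $\varepsilon\to 0$; the extra boundary term is bounded by $\|\omega\|_\infty$ times the $(j-1)$-volume of the part of $|\sigma|$ within distance $\varepsilon$ of the small set $|\sigma|\cap Sing(X)$, which tends to $0$. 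Thus $\omega\mapsto(\sigma\mapsto\int_\sigma\omega)$ is a morphism of cochain complexes from $\Omega^\bullet_\infty(Reg(X))$ into the dual of the intersection chain complex $IC_*^{\overline t}(X)$, and it induces a natural map $H^j_\infty(Reg(X))\to (IH^{\overline t}_j(X))^{*}$, i.e. into $IH^{\overline t}_j(X)$ over $\R$.

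To prove this map is an isomorphism I would pass to sheaves on $X$. The $\mathcal{L}^\infty$ condition is not local, so the naive sheafification of $U\mapsto\Omega^\bullet_\infty(Reg(U))$ forgets it; instead I take the complex of sheaves $\mathcal{F}^\bullet$ on $X$ whose sections over an open $U\subset X$ are the forms on $U\cap Reg(X)$ that are bounded, with bounded differential, near every point of $U$. This $\mathcal{F}^\bullet$ is a complex of soft sheaves (local cut-off functions with bounded differential exist), and since $X$ is compact $\Gamma(X,\mathcal{F}^\bullet)=\Omega^\bullet_\infty(Reg(X))$, so the hypercohomology of $\mathcal{F}^\bullet$ computes $H^\bullet_\infty(Reg(X))$. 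By the Goresky-MacPherson uniqueness theorem it then suffices to check that $\mathcal{F}^\bullet$ satisfies the axioms characterizing the complex computing intersection homology in the maximal perversity: on $Reg(X)$ it is quasi-isomorphic to the constant sheaf (the ordinary Poincar\'e lemma, the standard contracting homotopy on a small ball preserving boundedness), it satisfies the obvious lower-bound and support conditions, and it satisfies the attaching (cone) condition along each stratum. Only the last is substantial. One may equally argue by a Mayer-Vietoris induction on the depth of the stratification, using a semi-algebraic partition of unity subordinate to a cover by distinguished neighbourhoods and whose differentials are bounded; this again reduces everything to the local cone statement.

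The attaching condition at a point $x$ in a stratum of codimension $c$, with link $L$, amounts to identifying $H^\bullet_\infty(Reg(U_x))\cong H^\bullet_\infty(Reg(cL))$ (the factor $(0;1)^{m-c}$ of the distinguished neighbourhood being harmless) with the appropriate truncation of the intersection data of $L$, the truncation occurring at degree $\overline t_c=c-2$. This is where the real difficulty lies, and it is metric: $U_x$ is a topological cone but not, in general, bi-Lipschitz to a standard metric cone $dr^2+r^2g_L$, so one cannot simply invoke the Poincar\'e lemma for a warped product. I would handle it by first describing the metric structure of the subanalytic germ --- choosing a semi-algebraic retraction of $U_x$ onto the cone $cL$ and controlling, on $Reg(U_x)$, the induced Riemannian metric against a suitable family of metrics on the slices $\{r\}\times Reg(L)$ --- and then running a radial homotopy: integrating a bounded closed form along the cone direction produces a primitive, and one checks that this operator sends $\mathcal{L}^\infty$ forms to $\mathcal{L}^\infty$ forms precisely above degree $c-2$, which forces vanishing there, while below that degree it matches the local cohomology with that of $L$; the cut-off $c-2$ comes out of the interplay between the boundedness constraint and the radial scaling weights of forms. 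Getting this balance right, uniformly over the non-conical metric behaviour allowed by subanalyticity, is the main obstacle. Once the local model is under control the identification is formal: the abstract quasi-isomorphism is compatible with restriction to opens and with the attaching maps, hence coincides with the integration morphism, and the case of a pseudomanifold with boundary follows by the same argument applied to the stratified collar of Definition \ref{dfn_pseudo_a_bord}.
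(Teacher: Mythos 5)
The theorem is not proved in the present paper; it is quoted from \cite{Valette2} (Theorem 1.2.2 there) and used as a black box, so the relevant comparison is with Valette's argument in that reference. Your outline has the right global shape: a de Rham comparison map by integration, with the allowability bound $\dim(|\sigma|\cap \mathrm{Sing}(X))<j-1$ making both the integral and Stokes' formula legitimate on $(\bar t,j)$-allowable chains, followed by a localization (Deligne-sheaf axiomatics in your case, Mayer--Vietoris as an alternative) reducing everything to a cone computation for $\mathcal{L}^\infty$ cohomology.

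The genuine gap sits exactly where you flag the ``main obstacle,'' and it is not a matter of care with estimates: the local cone step requires a nontrivial geometric input that you neither cite nor reprove, namely Valette's \emph{Lipschitz conic structure theorem} for subanalytic germs (established in an earlier paper of his and the engine of \cite{Valette2}). A subanalytic germ $(X,x_0)$ is in general not bi-Lipschitz to the straight cone over its link; what holds is that there is a subanalytic homeomorphism from the straight cone $x_0 * (X\cap \S(x_0,\varepsilon))$ onto $X\cap \overline{\mathbb{B}}(x_0,\varepsilon)$ that is Lipschitz, preserves the distance to $x_0$, and is the identity on the link --- one-sided Lipschitz control only. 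That one-sided control is precisely what allows one to pull back bounded forms and run a radial contracting homotopy with controlled norms, and it is what produces the truncation at degree $\bar t_c=c-2$. Without it, ``running a radial homotopy \dots and one checks that this operator sends $\mathcal{L}^\infty$ forms to $\mathcal{L}^\infty$ forms precisely above degree $c-2$'' is a statement of intent, not an argument; a direct estimate on the ambient subanalytic metric near the stratum will not deliver it. A secondary point: Valette's proof does not pass through the Goresky--MacPherson axiomatic characterization but is a more hands-on inductive argument over a Lipschitz cell decomposition; still, both your packaging and his bottom out at the same Lipschitz-cone lemma, so the choice of framework is not where the difficulty lies.
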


\subsection{The Jelonek set.} Let $F : \C^n \to \C^n$ be a polynomial mapping. We denote by $S_F$ the set of points at which the mapping $F$ is not proper, {\it i.e.} 
$$S_F = \{ y \in \C^n \text{ such that } \exists \{ x_k\} \subset \C^n, \vert x_k \vert \to \infty, F(x_k) \to y\},$$
and call it the {\bf asymptotic variety} or {\bf Jelonek set} of $F$. The geometry of this set was studied by Jelonek in a series of papers \cite{Jelonek1, Jelonek2, Jelonek3}. Jelonek obtained a nice description of this set and gave an upper bound for its degree. For the details and applications of these results we refer to the works of Jelonek. In our paper, we will need the following powerful theorems.

\begin{theorem} [\cite{Jelonek1}] \label{Jelonek1} {\it If $F: \C^n \to \C^n$ is a generically finite polynomial mapping, then $S_F$ is either an $(n-1)$ pure dimensional $\C$-uniruled algebraic variety or the empty set.}

\end{theorem}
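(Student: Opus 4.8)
The plan is to realise $S_F$ as the image of the divisor at infinity of a suitable compactification of the source, and then to read off algebraicity, dimension and uniruledness from the geometry of that divisor. Since a generically finite self-map of $\C^n$ is dominant, $F$ extends to a rational map $\bar F:\P^n\dashrightarrow\P^n$ between projective completions whose indeterminacy locus lies in the hyperplane at infinity $H_\infty=\P^n\setminus\C^n$ of the source. Resolving this indeterminacy by a finite sequence of blow-ups with smooth centres contained in $H_\infty$, one obtains a smooth projective variety $X$, a birational morphism $\pi:X\to\P^n$ which is the identity over $\C^n$, and a morphism $\tilde F:X\to\P^n$ with $\tilde F|_{\C^n}=F$. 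Set $D=X\setminus\C^n$; it is a divisor whose irreducible components $D_1,\dots,D_r$ are the strict transform of $H_\infty$ and the exceptional divisors of the blow-ups. The elementary but crucial point, which follows from the curve selection lemma, is that
$$ S_F \;=\; \tilde F(D)\cap \C^n \;=\; \bigcup_{i=1}^{r}\bigl(\tilde F(D_i)\cap\C^n\bigr). $$
Indeed, if $y\in S_F$ there is a real-analytic arc $x(t)$ with $|x(t)|\to\infty$ and $F(x(t))\to y$; by properness of $X$ this arc accumulates in $X$ at a point of $D$ whose $\tilde F$-image is $y$. Conversely any $\xi\in D$ with $\tilde F(\xi)\in\C^n$ is a limit of points of $\C^n$, which produces such an arc.

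Algebraicity and the dimension bound are then formal: $D$ is projective and $\tilde F$ a morphism, so each $\tilde F(D_i)$ is a closed algebraic subset of $\P^n$ of dimension at most $\dim D_i=n-1$, hence $S_F$ is a closed algebraic subvariety of $\C^n$ of dimension $\le n-1$; it is not all of $\C^n$ because by generic flatness $F$ is finite, hence proper, over a non-empty Zariski-open subset of the target. For $\C$-uniruledness one uses the freedom in the choice of $X$: since $X$ was obtained by blowing up $\P^n$ inside $H_\infty$, every component $D_i$ is itself $\C$-uniruled, the strict transform of $H_\infty\cong\P^{n-1}$ being rational and each exceptional divisor being covered by the rational curves contracted by $\pi$. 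Whenever $\tilde F(D_i)\not\subset H_\infty$ the map $\tilde F|_{D_i}$ is non-constant, so the image of a general rational curve of $D_i$ through a general point of $\tilde F(D_i)\cap\C^n$ is a non-constant rational curve through that point; a careful choice of these curves (so that they meet $H_\infty$ in a single point) then yields the required non-constant polynomial parametrisations $\C\to S_F$, giving the $\C$-uniruledness of each component.

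The remaining and hardest point is that every irreducible component $S$ of $S_F$ has dimension exactly $n-1$. The strategy is to reduce the dimension by slicing the target. If $\dim S=k$, choose a general affine subspace $L\subset\C^n$ of dimension $n-k$ through a general point $y_0\in S$; then $y_0$ is isolated in $S_F\cap L$, while $F^{-1}(L)\to L$ is again a dominant generically finite regular map between smooth affine varieties of dimension $n-k$ whose non-properness set is $S_F\cap L$ and thus has an isolated point. One is reduced to proving that the non-properness set of a dominant generically finite regular map between irreducible affine varieties of dimension $\ge 2$ has no isolated points; a further general hyperplane section in the target brings this down to the surface case, where it follows from the fact that a quasi-finite dominant morphism which is proper over the complement of a finite set is in fact proper there as well (Zariski's main theorem together with the valuative criterion of properness). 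I expect this last step — keeping track of how the non-properness set behaves under slicing and closing up the induction at the bottom — to be the main technical obstacle, the algebraicity and uniruledness parts being comparatively formal once the compactification $X$ and the morphism $\tilde F$ are in place.
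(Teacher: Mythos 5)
This is a theorem the paper imports from Jelonek \cite{Jelonek1} without proof, so there is no in-paper argument to compare against; I will assess your sketch on its own terms. The preliminary reductions are sound: resolving the rational extension $\bar F$ to a morphism $\tilde F:X\to\P^n$ and identifying $S_F=\tilde F(D)\cap\C^n$ is correct, and algebraicity together with $\dim S_F\le n-1$ do follow. But the two substantive claims both contain genuine gaps. For $\C$-uniruledness, the parenthetical ``(so that they meet $H_\infty$ in a single point)'' is doing all the work and is unsupported: the rational curves covering an exceptional divisor $D_i$ map under $\tilde F$ to rational curves in $\P^n$ that may meet the target hyperplane at infinity in several points, in which case their trace on $\C^n$ is $\P^1$ minus several points, which is \emph{not} the image of any non-constant polynomial map $\C\to\C^n$. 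Since $\C$-uniruledness (as defined in the paper, via polynomial parametrisations $\varphi:\C\to X$) is strictly stronger than uniruledness, this is not a detail to be filled in later but the heart of the matter, and nothing in your construction controls the number of places at infinity of $\tilde F(C)$.

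For pure dimensionality, the slicing strategy is only gestured at and, as you acknowledge, the hard content is hidden in two unproved steps. First, $S_{F|_{F^{-1}(L)}}=S_F\cap L$ for a generic linear slice $L$ is not automatic: the inclusion $\supseteq$ requires producing an escaping sequence \emph{inside} $F^{-1}(L)$, which is close to what one is trying to prove. Second, the proposed base case, ``a quasi-finite dominant morphism which is proper over the complement of a finite set is in fact proper'', is not established by ``ZMT plus the valuative criterion'' alone; one needs normality of the source and a Hartogs-type extension for normal surfaces (and one must separately rule out positive-dimensional fibres, since the map $F^{-1}(L)\to L$ need not be quasi-finite a priori). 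Note also that this base case is essentially Theorem \ref{Jelonek2} of the paper in dimension $2$, so there is a risk of circularity. For comparison, Jelonek's argument is considerably more elementary and avoids both resolution of indeterminacy and any induction on dimension: for each $i$ one considers $G_i(x)=(F(x),x_i)$, whose image closure is an irreducible hypersurface $\{P_i(y,t)=0\}\subset\C^{n+1}$ with $P_i(y,t)=a_0^{(i)}(y)t^{d_i}+\dots$, and one shows directly that $S_F=\bigcup_i\{a_0^{(i)}=0\}$, a union of affine hypersurfaces --- this yields algebraicity \emph{and} pure dimension $n-1$ in one stroke --- while $\C$-uniruledness is obtained from Laurent-polynomial parametrisations of the escape curves, not from rational curves on a blown-up compactification.
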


\begin{theorem} [\cite{Jelonek1}] \label{Jelonek2} {\it If $F : X \to Y$ is a dominant polynomial map of smooth affine varieties of the same dimension then $S_F$ is either empty or is a hypersurface.}
\end{theorem}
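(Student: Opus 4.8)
\medskip
\noindent\textbf{Proof proposal.} Since $F$ is dominant and $\dim X=\dim Y=n$, it is generically finite; write $d$ for its degree. The plan is to establish, in turn, that $S_F$ is Zariski closed, that $\dim S_F\le n-1$, and --- the essential point --- that every irreducible component of $S_F$ has dimension exactly $n-1$.

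First I would fix projective compactifications $X\subset\overline X$ and $Y\subset\overline Y$ chosen so that $\overline X\setminus X$ is a hypersurface of $\overline X$ (possible because $X$ is affine), and let $\overline\Gamma\subset\overline X\times\overline Y$ be the Zariski closure of the graph of $F$; it is irreducible of dimension $n$, and the second projection $p\colon\overline\Gamma\to\overline Y$ is proper, surjective, and generically finite of degree $d$ under the identification $\Gamma_F\cong X$ by the first projection. Put $B:=\overline\Gamma\cap\bigl((\overline X\setminus X)\times\overline Y\bigr)$. An elementary sequence argument gives $S_F=p(B)\cap Y$; since $p$ is proper, $p(B)$ is closed, so $S_F$ is algebraic, and the theorem holds trivially if $B=\emptyset$. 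If $B\ne\emptyset$, then, $\overline X\setminus X$ being the zero set of a single equation on $\overline X$ while $\Gamma_F$ is dense in $\overline\Gamma$ and disjoint from $B$, Krull's principal ideal theorem shows $B$ has pure dimension $n-1$. For the bound $\dim S_F\le n-1$, recall that a dominant generically finite morphism is finite, hence proper, over some dense Zariski-open $V\subseteq Y$; as $Y$ is closed in affine space its bounded closed subsets are compact, so properness over $V$ forces $S_F\cap V=\emptyset$, and therefore $\dim S_F\le n-1$ because $Y$ is irreducible of dimension $n$.

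It remains to prove equidimensionality, and this is where the substantive work lies. Decompose $B=B_1\cup\cdots\cup B_s$ into irreducible components, each of dimension $n-1$; then $S_F=\bigcup_i\bigl(\overline{p(B_i)}\cap Y\bigr)$ with each $\overline{p(B_i)}$ irreducible, so each irreducible component of $S_F$ is one of the nonempty sets $\overline{p(B_i)}\cap Y$. Thus it suffices to show that $p$ contracts no $B_i$ whose image meets $Y$, i.e.\ that $p|_{B_i}\colon B_i\to\overline{p(B_i)}$ is generically finite for each such $i$. The tool I would use is a slicing argument: for a general one-dimensional linear slice $\ell$ of $Y$ (cut out by general affine-linear equations after an embedding of $Y$ in affine space) the curve $F^{-1}(\ell)$ maps dominantly onto $\ell$ with degree $d$; normalizing and compactifying yields a finite degree-$d$ morphism of smooth projective curves $\widetilde C_\ell\to\overline\ell\cong\P^1$. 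The finitely many points of $\widetilde C_\ell$ over $\infty\in\P^1$, together with those that lie over finite values of $\ell$ but come from infinity in $X$, exhaust the $d$ points of a fibre; counting them shows that $\ell\cap S_F$ is nonempty exactly when some branch of $X$ at infinity maps into $Y$, which happens precisely when $S_F\ne\emptyset$, and, by running this count along slices that are general subject to passing through a prescribed point of $S_F$, that $\dim_y S_F\ge n-1$ at every $y\in S_F$; with $\dim S_F\le n-1$ this gives equidimensionality. The delicate steps are the Bertini-type genericity making $F^{-1}(\ell)$ and its compactification behave as described --- especially for slices constrained to pass through a prescribed point of $S_F$ --- and the uniformity of the fibre count over the family of slices; I expect that to be the main obstacle, while everything preceding it is formal manipulation of compactifications and proper maps.
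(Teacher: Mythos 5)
This theorem is cited in the paper directly from Jelonek \cite{Jelonek1}; the paper itself gives no proof, so there is no ``paper's own proof'' against which to match. Comparing instead with Jelonek's argument: his proof is algebraic rather than geometric. One fixes generators $x_1,\dots,x_m$ of $\C[X]$ over $\C[Y]$ and, for each $x_j$, writes a (normalized, primitive) polynomial relation $a_0^{(j)}(y)\,T^{k_j}+a_1^{(j)}(y)\,T^{k_j-1}+\cdots+a_{k_j}^{(j)}(y)$ satisfied by $x_j$ over $\C[Y]$; one then shows that $y_0\notin S_F$ if and only if every leading coefficient $a_0^{(j)}(y_0)\neq 0$, so that $S_F=\bigcup_j V(a_0^{(j)})\cap Y$. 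This exhibits $S_F$ at once as a finite union of hypersurfaces, and pure dimensionality is immediate. Your compactification route is a genuinely different, geometric take, and the first two-thirds of it is sound: the identification $S_F=p(B)\cap Y$, the Krull argument showing $B$ is pure of dimension $n-1$, and the bound $\dim S_F\le n-1$ are all correct.

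The gap is in the equidimensionality step, which is precisely the substance of Jelonek's theorem, and your slicing argument as written does not close it. For a truly general $1$-dimensional slice $\ell$, the curve $p^{-1}(\ell)\cap\overline\Gamma$ meets $B$ in finitely many points, and the count you describe only detects intersections of $\ell$ with the union of the $(n-1)$-dimensional components of $p(B)$; a general $\ell$ simply misses any component of $p(B)\cap Y$ of dimension $<n-1$. So ``$\ell\cap S_F\neq\emptyset$ precisely when $S_F\neq\emptyset$'' is not something the count establishes independently --- it is essentially equivalent to the equidimensionality you are trying to prove. The alternative you flag, constraining $\ell$ to pass through a prescribed $y_0\in S_F$, destroys exactly the genericity needed: if some $B_i$ with $y_0\in p(B_i)$ is contracted by $p$, then $p^{-1}(y_0)\cap B_i$ is positive-dimensional, $p^{-1}(\ell)\cap\overline\Gamma$ acquires a vertical component over $y_0$ not dominating $\ell$, and the ``normalize and compactify to a degree-$d$ cover of $\P^1$'' picture breaks down precisely at the point you want to inspect. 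Moreover, even knowing that a constrained slice through $y_0$ meets $S_F$ at $y_0$ gives no lower bound on $\dim_{y_0}S_F$: a $1$-dimensional general slice through a point of $S_F$ meets $S_F$ only at that point near $y_0$ regardless of whether the local dimension is $0$ or $n-1$. To make your route work one would need a different mechanism to rule out contracted $B_i$ with image meeting $Y$ (or to show any such contracted image is swallowed by a non-contracted one); Jelonek's leading-coefficient characterization sidesteps the problem entirely by never forming the image $p(B)$ at all.
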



Here, by a $\C$-uniruled variety $X$ we mean that through any point of $X$ passes a rational complex curve included in $X$. In other words, $X$ is $\C$-uniruled if for all $x \in X$ there exists a non-constant polynomial mapping $\varphi_x: \C \to X$ such that $\varphi_x(0) = x$.

In the real case, the Jelonek set is an $\R$-uniruled semi-algebraic set but, if nonempty, its dimension can be any integer between 1 and $(n-1)$   \cite{Jelonek3}.

\section{THE VARIETY $N_F$} 
The variety $N_F$ was constructed by the second and third authors in \cite{Valette}. Let us recall briefly this construction.
\subsection{Construction of the variety $N_F$ (\cite{Valette})} \label{constructionNF}
We will consider polynomial mappings $F : \C^n \to \C^n$ as real ones $F : \R^{2n} \to \R^{2n}$. By $Sing(F)$ we mean the singular locus of $F$ that   is the zero set of its Jacobian determinant and we denote by $K_0(F)$ the set of critical values of $F$,
 {\it  i.e.}  the set $F(Sing(F))$. 

We denote by $\rho$ the Euclidean Riemannian metric in $\R^{2n}$. We can pull it back in a natural way:
$$F^* \rho_x (u,v) : = \rho (d_x F(u), d_x F(v)).$$ 
Define the Riemannian manifold  $M_F : = (\R^{2n} \setminus Sing(F); F^{*} \rho)$ and observe that the mapping $F$ induces a local isometry nearly any point of $M_F$.

\begin{lemma} [\cite{Valette}] \label{Valette1} 
{\it There exists a finite covering of $M_F$ by open semi-algebraic subsets such that on every element of this covering, the mapping $F$ induces a diffeomorphism onto its image.}
\end{lemma}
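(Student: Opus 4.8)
The plan is to use the nonvanishing of the Jacobian to realize $F$ as a local diffeomorphism of $M_F$, and then to produce the finite cover by combining the inverse function theorem with Hardt's semi-algebraic triviality theorem.

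First, since the Jacobian determinant of $F$ does not vanish on $M_F=\R^{2n}\setminus Sing(F)$, the inverse function theorem shows that $F$ restricts to a local diffeomorphism of $M_F$ (indeed a local biholomorphism, and a local isometry for $F^{*}\rho$); in particular its fibres over $M_F$ are discrete. Since moreover $F$ is a polynomial map whose Jacobian is not identically zero, it is dominant and generically finite, so these fibres are in fact finite, of cardinality bounded by the geometric degree of $F$. Observe also that, since $F$ is a local diffeomorphism of $M_F$, it suffices to find a finite open semi-algebraic cover of $M_F$ on whose members $F$ is merely \emph{injective}, for the restriction of $F$ to such a member is then an injective local diffeomorphism, hence a diffeomorphism onto its open image.

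Next, I would apply Hardt's semi-algebraic triviality theorem to $F_{|M_F}\colon M_F\to\C^{n}$, arranging the resulting partition to come from a cell decomposition $\C^{n}=\bigsqcup_{\alpha}C_{\alpha}$: over each cell $C_{\alpha}$ the map $F$ is semi-algebraically trivial, so $F^{-1}(C_{\alpha})\cap M_F$ is semi-algebraically homeomorphic over $C_{\alpha}$ to $C_{\alpha}\times\Phi_{\alpha}$ with $\Phi_{\alpha}$ semi-algebraic; as $F$ is a local homeomorphism, $\Phi_{\alpha}$ is discrete, hence finite, so $F^{-1}(C_{\alpha})\cap M_F$ is the disjoint union of finitely many ``sheets'' $E_{\alpha,1},\dots,E_{\alpha,k_{\alpha}}$, each carried homeomorphically onto $C_{\alpha}$ by $F$. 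This already realizes $M_F$ as a \emph{finite} disjoint union of semi-algebraic sets on each of which $F$ is injective, and the sheets lying over the top-dimensional (open) cells are already open in $M_F$, covering $M_F$ outside $F^{-1}(\Sigma)$, where $\Sigma$ is the lower-dimensional skeleton.

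It then remains to cover $F^{-1}(\Sigma)\cap M_F$ by finitely many open semi-algebraic sets on which $F$ is injective, which I would obtain by a downward induction on $\dim C_{\alpha}$, enlarging at each stage the relevant sheets to small open neighbourhoods of themselves in $M_F$. The admissible size of the enlargement is governed by the ``injectivity radius'' function $\iota(x)=\tfrac12\,\mathrm{dist}\!\big((x,x),\overline{Z'}\big)$, where $Z'=\{(x,x')\in M_F\times M_F:\ F(x)=F(x'),\ x\ne x'\}$; this function is everywhere positive on $M_F$ (which is precisely the statement that $F$ is locally injective), continuous and semi-algebraic, and $F$ is injective on each ball $\mathbb{B}(x,\iota(x))\cap M_F$. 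I expect this last step to be the main obstacle, since the enlargement has to remain semi-algebraic, stay an injectivity domain, and use only finitely many sets; the finiteness genuinely relies on semi-algebraicity (through Hardt's theorem), as a general, non-algebraic local diffeomorphism of a non-compact manifold, such as $t\mapsto e^{it}$ on $\R$, admits no finite open cover by injectivity domains.
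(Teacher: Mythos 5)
You correctly reduce the lemma to producing a finite semi-algebraic open cover on which $F$ is injective, and Hardt's theorem is a reasonable source of the finiteness. The gap is precisely the step you flag as the main obstacle: thickening the sheets over lower-dimensional cells into open injectivity domains. The function $\iota(x)=\tfrac12\,\mathrm{dist}\bigl((x,x),\overline{Z'}\bigr)$ does give a ball $\mathbb{B}(x,\iota(x))\cap M_F$ on which $F$ is injective, but the union of such balls over a whole sheet $E_{\alpha,j}$ need not remain an injectivity domain: if $y_1\in\mathbb{B}(x_1,\iota(x_1))$ and $y_2\in\mathbb{B}(x_2,\iota(x_2))$ with $x_1\ne x_2$ in $E_{\alpha,j}$ and $F(y_1)=F(y_2)$, $y_1\ne y_2$, then the pair $(y_1,y_2)$ lies in $Z'$ but need not be close to either diagonal point $(x_1,x_1)$ or $(x_2,x_2)$, so the definition of $\iota$ furnishes no contradiction. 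In other words, $\iota$ controls injectivity within a single ball, not across a tube over a (generally non-compact) lower-dimensional stratum along which $\iota$ has no positive lower bound.

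Closing this step requires genuinely new control --- e.g.\ shrinking the tube by the distance to the other sheets over $C_\alpha$ and to $F$-fibres of nearby values, while keeping the enlarged sets open, semi-algebraic, and finitely many, or else replacing the tube construction by a further downward induction with a different mechanism --- and that is where the real content of the lemma sits. Your observation that $t\mapsto e^{it}$ on $\R$ admits no finite open cover by injectivity domains is a good sanity check that semi-algebraicity is essential, but it does not, by itself, tell you how to exploit it at this step. Note also that the present paper only cites the lemma from \cite{Valette} without reproducing the proof, so there is no in-paper argument to compare against; as written, though, your sketch does not close.
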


\begin{proposition} [\cite{Valette}]  \label{valette2} 
{\it Let $F: \C^n \to \C^n$ be a polynomial mapping. There exists a real semi-algebraic pseudomanifold $N_F \subset \R^{\nu}$,
 for some $\nu  = 2n + p$, where $p>0$  such that 
$$ Sing (N_F) \subset (S_F \cup K_0(F)) \times \{0_{\R^{p}}\},$$
and there exists a semi-algebraic bi-Lipschitz mapping
$$h_F: M_F \to N_F \setminus (S_F \cup K_0(F)) \times \{0_{\R^{p}}\}$$
where $N_F \setminus (S_F \cup K_0(F)) \times \{0_{\R^{p}}\}$ is equipped with the Riemannian metric induced by $\R^{\nu}.$}
\end{proposition}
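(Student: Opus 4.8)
The plan is to construct $N_F$ explicitly by compactifying the covering from Lemma \ref{Valette1} and gluing the pieces along the Jelonek set and critical values. First I would fix a finite open semi-algebraic cover $\{U_1,\dots,U_r\}$ of $M_F$ provided by Lemma \ref{Valette1}, so that $F|_{U_j}$ is a diffeomorphism onto its image. The idea is to encode, at each point $x\in M_F$, both the value $F(x)\in\R^{2n}$ and enough combinatorial/geometric data (which sheets $U_j$ contain $x$, and a partition-of-unity type function) to separate the sheets that $F$ itself fails to separate. Concretely, choose semi-algebraic functions $\varphi_1,\dots,\varphi_p$ on $M_F$ (built from a semi-algebraic partition of unity subordinate to the $U_j$, suitably bounded) and set $h_F(x)=(F(x),\varphi_1(x),\dots,\varphi_p(x))\in\R^{2n+p}$. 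Then $N_F$ is defined to be the closure $\overline{h_F(M_F)}$ in $\R^{\nu}$, $\nu=2n+p$.

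The main steps are then: (1) show $h_F$ is injective with semi-algebraic inverse, which follows because on the overlap of sheets the extra coordinates $\varphi_j$ record which sheet one is on, and $F$ is injective on each sheet; (2) show $h_F$ is bi-Lipschitz onto its image, where the nontrivial direction is that $h_F^{-1}$ is Lipschitz --- here one uses that the $\varphi_j$ can be chosen with bounded differential with respect to $F^*\rho$ (they are essentially pulled back along $F$ from bounded data), so that the Euclidean metric on $h_F(M_F)$ is comparable to $F^*\rho$; (3) identify the frontier $\overline{h_F(M_F)}\setminus h_F(M_F)$: a point in the closure is a limit $h_F(x_k)$ with either $|x_k|\to\infty$ or $x_k\to Sing(F)$, and in the first case $F(x_k)\to y\in S_F$ while in the second $F(x_k)\to y\in K_0(F)$, so the frontier projects into $(S_F\cup K_0(F))\times\{0_{\R^p}\}$ (the extra coordinates tend to $0$ provided the $\varphi_j$ are chosen to vanish at infinity and near $Sing(F)$); (4) check the pseudomanifold property, i.e.\ that $Sing(N_F)$ has codimension $\ge 2$ and $Reg(N_F)$ is dense: density is immediate since $h_F(M_F)$ is an open dense smooth submanifold, and the codimension bound follows from Theorems \ref{Jelonek1} and \ref{Jelonek2}, which give $\dim S_F\le n-1$ (real codimension $\ge 2$ in $\R^{2n}$) together with the fact that $\dim K_0(F)\le 2n-2$ since $Sing(F)$ is a proper algebraic subset; one must also argue that the singular locus of $N_F$ is contained in the frontier, i.e.\ that $h_F(M_F)$ contains no singular points of $N_F$, which holds because $h_F$ is a bi-Lipschitz homeomorphism onto an open subset of $N_F$ that is a smooth manifold.

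The hard part will be step (3), the precise control of the frontier: one must ensure that the auxiliary coordinates $\varphi_j(x_k)$ really do converge to $0$ along every sequence escaping to infinity or to $Sing(F)$, and that no "spurious" boundary points are created with nonzero last coordinates. This forces a careful choice of the functions $\varphi_j$ --- for instance taking them of the form $\varphi_j = \theta_j\cdot(1+|x|^2)^{-1}$ for a semi-algebraic partition of unity $\{\theta_j\}$, or incorporating a factor measuring distance to $Sing(F)$ --- so that boundedness of $d\varphi_j$ in the metric $F^*\rho$ (needed for step (2)) is compatible with vanishing at infinity and near the critical locus (needed for step (3)). A secondary subtlety is verifying the bi-Lipschitz estimate uniformly across all sheets of the cover, for which one invokes that the cover is finite and that $F$ is a local isometry of $M_F$ by construction. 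Once these estimates are in place, the remaining assertions of the proposition are formal, and the stated containment $Sing(N_F)\subset (S_F\cup K_0(F))\times\{0_{\R^p}\}$ together with the bi-Lipschitz equivalence $h_F\colon M_F\to N_F\setminus (S_F\cup K_0(F))\times\{0_{\R^p}\}$ follows.
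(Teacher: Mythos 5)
Your proposal follows essentially the same approach as the paper: embed $M_F$ into $\R^{2n+p}$ via $h_F=(F,\text{auxiliary functions})$, take the closure to define $N_F$, choose the auxiliary functions to vanish at infinity and near $Sing(F)$ so that the frontier lands in $(S_F\cup K_0(F))\times\{0_{\R^p}\}$, and use Łojasiewicz-type bounds (a large power of $(1+|x|^2)^{-1}$) to make the auxiliary functions small enough that $h_F$ is bi-Lipschitz for the metric $F^*\rho$. The one place you diverge is in the choice of auxiliary coordinates: you propose a semi-algebraic partition of unity subordinate to the cover $\{U_j\}$, whereas the paper invokes Mostowski's Separation Lemma to produce Nash functions $\psi_i$ that are strictly positive on a closed refinement $V_i$ and strictly negative on $M_F\setminus U_i$. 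Both devices separate sheets that $F$ fails to separate, but Mostowski's lemma is the more natural tool in the Nash/semi-algebraic category: it directly hands you $C^\infty$ semi-algebraic (Nash) functions, which is needed for the derivative estimates in the bi-Lipschitz step, whereas a semi-algebraic partition of unity is typically only continuous (or piecewise smooth) and would require additional smoothing work to get the required bounded-differential control with respect to $F^*\rho$. Apart from that technical substitution, your decomposition into steps (injectivity, bi-Lipschitz estimate via derivative bounds, frontier control, codimension count via Jelonek's theorems) matches the paper's construction.
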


The variety $N_F$ is constructed as follows: let $F: \C^n \to \C^n$ be a polynomial mapping. Thanks to 
Lemma \ref{Valette1}, there exists a covering $\{ U_1, \ldots , U_p \}$ of $M_F = \R^{2n} \setminus Sing(F)$ by open semi-algebraic subsets (in $\R^{2n}$) such that on every element of this covering, the mapping $F$ induces a diffeomorphism onto its image. We may find some semi-algebraic closed  subsets $V_i \subset U_i$ (in $M_F$) which cover $M_F$ as well. Thanks to Mostowski's Separation Lemma (see Separation Lemma in \cite{Mos}, page 246), for each $i$,  $\, i =1, \ldots , p$, there exists a Nash function $\psi_i : M_F \to \R$,  such that  $\psi_i$ is positive on $V_i$ and negative on $M_F \setminus U_i$. We define 
$$h_F : = (F, \psi_1, \ldots, \psi_p) \text{ and } N_F : = \overline{h_F(M_F)}.$$

In order to prove $h_F$ is bi-Lipschitz, we do as follows: choose $x \in M_F$, then there exists $U_j$ such that $x \in U_j$ and  the mapping $F_{|U_j}:U_j \to \R^{2n} $ is a diffeomorphism onto
its image.  Define, for $y \in F(U_j)$, the following functions:
\begin{equationth}
\tilde{\psi}_i(y):=\psi_i \circ (F_{|U_j})^{-1}(y),
\end{equationth}
for $i=1,\dots ,p$, and
\begin{equationth}
\hat{\psi}(y):=(y,\tilde{\psi_1}(y),\dots,\tilde{\psi_p}(y)).
\end{equationth}
We then have the formula
\begin{equationth}h_F(x)=(F(x),\tilde{\psi}_1(F(x)),\dots,\tilde{\psi}_p(F(x)))=\hat{\psi} (F(x)).\end{equationth}
As the map $F: (U_j,F^\ast\rho) \to F(U_j)$ is bi-Lipschitz, it is enough to
show that $\hat{\psi}:F(U_j) \to \R^{2n+p}$ is
bi-Lipschitz. This amounts to prove that $\tilde{\psi}_i$ has
bounded derivatives for any $i=1,\dots,p$. In order to prove this, we chose the functions $\psi_i$ sufficiently small, by using  \L ojasiewicz inequality in the following form:
\begin{proposition} \cite{bcr} \label{lojasiewicz}
Let $A \subset \R^n$ be a closed semi-algebraic set and $f:A \to\R$ a continuous semi-algebraic function. There exist
$c\in\R, c \geq 0$ and $q \in\N$ such that for any $x\in A$ we have
$$|f(x)|\le c(1+|x|^2)^q.$$
\end{proposition}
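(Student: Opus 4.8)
The strategy is to reduce the statement to the one-variable case by replacing $f$ with its maximum over Euclidean balls, and then to invoke the fact that a semi-algebraic function of one real variable grows at most polynomially at infinity.

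First I would introduce the function $g:[0,+\infty)\to\R$ defined by
$$g(r):=\sup\{\,|f(x)| : x\in A,\ |x|\le r\,\},$$
with the convention $g(r)=0$ when $A\cap\mathbb{B}(0,r)=\emptyset$. Since $A$ is closed, the set $A\cap\overline{\mathbb{B}(0,r)}$ is compact, and since $f$ is continuous the supremum is finite and attained; thus $g$ is well defined and visibly nondecreasing. Its graph is cut out inside $\R^2$ by a first-order formula over the real field involving only the semi-algebraic data $A$, $\graph f$ and the Euclidean norm, so by quantifier elimination $g$ is semi-algebraic. By construction $|f(x)|\le g(|x|)$ for every $x\in A$, so it suffices to bound $g(r)$ by a polynomial in $r$.

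The heart of the argument is then the one-variable assertion: a semi-algebraic function $g:[0,+\infty)\to\R$ satisfies $|g(r)|\le c_1 r^{N}$ for all $r$ large, for suitable $c_1\ge 0$ and $N\in\N$. To prove this, observe that $\graph g$ is a semi-algebraic subset of $\R^2$ of dimension at most $1$ (it projects injectively onto the $r$-axis), hence it is contained in the zero set of some nonzero polynomial $P\in\R[r,t]$. Writing $P(r,t)=\sum_{i=0}^{d}a_i(r)\,t^i$ with $a_d\not\equiv 0$, there is $R\ge 0$ with $a_d(r)\ne 0$ for every $r\ge R$; for such $r$ the number $g(r)$ is a root of the nonzero polynomial $t\mapsto P(r,t)$, so Cauchy's bound on the moduli of roots gives
$$|g(r)|\le 1+\max_{0\le i<d}\Bigl|\frac{a_i(r)}{a_d(r)}\Bigr|.$$
Each quotient $a_i/a_d$ is a rational function whose denominator does not vanish for $r\ge R$, hence is dominated by $r^{N}$ for a large enough integer $N$; this yields $|g(r)|\le c_1 r^{N}$ for $r\ge R$.

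Finally, being nondecreasing, $g$ is bounded on $[0,R]$ by $g(R)<+\infty$, so $g(r)\le c_1 r^{N}+g(R)\le c\,(1+r^2)^q$ for appropriate $c\ge 0$ and $q\in\N$; together with $|f(x)|\le g(|x|)$ this gives $|f(x)|\le c\,(1+|x|^2)^q$ on $A$. The only genuinely delicate point is the one-variable polynomial-growth lemma, which rests on the dimension theory of semi-algebraic sets (so that $\graph g$ lies on an algebraic curve) together with the elementary estimate of the roots of a polynomial by its coefficients; alternatively one can replace this step by the Puiseux expansion of $g$ at infinity, which directly exhibits $g(r)\sim a\,r^{p/q}$. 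Everything else is routine semi-algebraic bookkeeping.
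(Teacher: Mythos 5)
Your proof is correct, and it is essentially the standard argument: the paper itself gives no proof of this proposition, quoting it from \cite{bcr} (Proposition 2.6.2 there), whose proof is precisely your reduction to the one-variable majorant $g(r)=\sup\{|f(x)| : x\in A,\ |x|\le r\}$ followed by the polynomial-growth lemma for semi-algebraic functions of one variable. The only detail worth spelling out is that any nonzero $P\in\R[r,t]$ vanishing on the graph of $g$ must satisfy $\deg_t P\ge 1$ (a nonzero polynomial in $r$ alone would have to vanish at infinitely many values of $r$), so the Cauchy root bound is indeed applicable.
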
 

In fact, we can choose the Nash functions $\psi_i$ sufficiently small by multiplying $\psi_i$ by a huge power
of $\frac{1}{1+|x|^{2}}$ which is a Nash function (see Proposition 2.3 in \cite{Valette}).

Thanks to  \L ojasiewicz inequality, we also can choose the functions $\psi_i$ such that they tend to zero at infinity and near $Sing (F)$. This is the reason why the singular part of $N_F$ is contained in  $(S_F \cup K_0(F)) \times \{0_{\R^{p}}\}$. 

Moreover, the following diagram is commutative:

\begin{equationth} \label{dia1}
\xymatrix{
M_F \ar[dr]^{F}  \ar[r]^{h_F}  &     N_F    \ar[d]^{\pi_F}   \\
           &  \R^{2n} \setminus K_0(F),}
\end{equationth} 

\noindent where $\pi_F$ is the canonical projection on the first $2n$  coordinates, and $h_F$ is bijective onto its image $ N_F \setminus ((S_F \cup K_0(F)) \times \{0_{\R^{p}}\})$.

\medskip
Remark that the set  $N_F$ is not unique, it depends on the covering of $M_F$ that we choose and on the choice of the Nash function $\psi_i$.

\medskip

We see that in the complex case, even in the case $\C^2$, the real dimension of the variety $N_F$ is greater than 3, so it is difficult to draw the variety $N_F$ in this case. The natural question  arises if   the variety $N_F$ exists in the real case. The answer is yes, but we note that in this case, the variety $N_F$ is not necessarily a pseudomanifold, because in the real case, the real dimension of the Jelonek set of a polynomial mapping $F : \R^n \to \R^n$ can be $n-1$.

\begin{proposition} [\cite{Valette}, \cite{Thuy1}] \label{thuyvalette0}
{\it Let $F: \R^n \to \R^n$ be a polynomial mapping. There exist

a) a real semi-algebraic variety $N_F \subset \R^{\nu}$, for some $\nu =n+p$ 
where $ p>0$, such that 
$$ Sing (N_F) \subset (S_F \cup K_0(F)) \times \{0_{\R^{p}}\} \subset \R^{n} \times \R^{p},$$

b) a semi-algebraic bi-Lipschitz mapping
$$h_F: M_F \to N_F \setminus (S_F \cup K_0(F)) \times \{0_{\R^{p}}\}$$
where $N_F \setminus (S_F \cup K_0(F)) \times \{0_{\R^{p}}\}$ is equipped with the Riemannian metric induced by $\R^{\nu}.$}
\end{proposition}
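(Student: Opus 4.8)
The plan is to repeat, almost verbatim, the construction performed above for complex mappings, checking that every step uses only semi-algebraicity together with the fact that $F$ is a local diffeomorphism off its singular locus, so the argument is insensitive to the ground field. First I would record the real analogue of Lemma \ref{Valette1}: since $Sing(F)$ is the real algebraic set $\{\det d_xF=0\}$, on $M_F=\R^n\setminus Sing(F)$ the map $F$ is a local diffeomorphism, and, as in \cite{Valette}, a compactness-type argument within the semi-algebraic category produces a finite cover $\{U_1,\dots,U_p\}$ of $M_F$ by open semi-algebraic sets on each of which $F_{|U_i}$ is a diffeomorphism onto its (open) image. Shrinking, one obtains closed semi-algebraic subsets $V_i\subset U_i$ (closed in $M_F$) that still cover $M_F$.

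Next I would apply Mostowski's Separation Lemma (\cite{Mos}) to produce, for each $i$, a Nash function $\psi_i:M_F\to\R$ positive on $V_i$ and negative on $M_F\setminus U_i$, and then, exactly as in \cite{Valette}, replace $\psi_i$ by $\psi_i\cdot(1+|x|^2)^{-N}$ with $N$ large. Applying Proposition \ref{lojasiewicz} to the semi-algebraic functions bounding $\|d_x\psi_i\|$ and the entries of $d_y((F_{|U_j})^{-1})$ on the open sets $F(U_j)$, one checks that the functions $\tilde\psi_i=\psi_i\circ(F_{|U_j})^{-1}$ have bounded derivatives on each $F(U_j)$; hence $\hat\psi=(\mathrm{id},\tilde\psi_1,\dots,\tilde\psi_p)$ is bi-Lipschitz on $F(U_j)$, and since $F_{|U_j}:(U_j,F^{*}\rho)\to F(U_j)$ is bi-Lipschitz, the map $h_F:=(F,\psi_1,\dots,\psi_p)$ is locally bi-Lipschitz; it is injective on $M_F$ because the $\psi_i$ separate the pieces $V_i$, so it is bi-Lipschitz onto its image. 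The same \L ojasiewicz estimate, arranged so that the $\psi_i$ tend to $0$ both at infinity and as one approaches $Sing(F)$, is what forces the last $p$ coordinates to vanish on every limit point appearing when one passes to the closure.

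Then I would set $N_F:=\overline{h_F(M_F)}\subset\R^{n+p}$, which is semi-algebraic as the closure of the image of a semi-algebraic map, and verify the two claims. Part (b) is immediate: $h_F$ is a semi-algebraic bijection of $M_F$ onto $h_F(M_F)$ which is locally bi-Lipschitz, hence bi-Lipschitz onto its image, and the commutative diagram \eqref{dia1} (whose real version is identical, with $\pi_F$ the projection onto the first $n$ coordinates) identifies $h_F(M_F)$ with $N_F\setminus((S_F\cup K_0(F))\times\{0_{\R^{p}}\})$. For part (a), a point of $N_F\setminus h_F(M_F)$ is a limit of points $h_F(x_k)$ where either $|x_k|\to\infty$ --- in which case its first $n$ coordinates lie in $S_F$ --- or $x_k$ converges to a point $a\in Sing(F)$ (it cannot converge to a point of $M_F$, else $h_F(x_k)\to h_F(a)\in h_F(M_F)$) --- in which case its first $n$ coordinates equal $F(a)\in K_0(F)$ --- while in both cases the last $p$ coordinates vanish by the decay of the $\psi_i$. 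Thus $h_F(M_F)$ is open and dense in $N_F$ and is a smooth manifold, whence $Sing(N_F)\subset(S_F\cup K_0(F))\times\{0_{\R^{p}}\}$.

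Finally I would stress the only substantive difference with the complex case. In Proposition \ref{valette2} one also concludes that $N_F$ is a pseudomanifold, because $S_F\cup K_0(F)$ has real codimension at least $2$ in $\C^n\cong\R^{2n}$; over $\R$ the Jelonek set $S_F$ may be a hypersurface, so that codimension bound fails and $N_F$ need not be a pseudomanifold --- which is precisely why the statement only asserts that $N_F$ is a semi-algebraic variety, and why no further change to the argument is needed. The main obstacle, and the only point that is not a literal transcription of the complex proof, is the verification in part (a) that \emph{every} boundary point of $h_F(M_F)$ has vanishing last coordinates and first coordinates in $S_F\cup K_0(F)$; this is exactly where the smallness of the $\psi_i$ obtained from the \L ojasiewicz inequality is used in an essential way, just as in \cite{Valette} (see also \cite{Thuy1}).
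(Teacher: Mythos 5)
Your proposal is correct and follows exactly the route the paper takes: it transcribes the complex construction of Section 3.1 (finite semi-algebraic cover of $M_F$, Mostowski's Separation Lemma for the Nash functions $\psi_i$, rescaling by powers of $(1+|x|^2)^{-1}$ via \L ojasiewicz to control derivatives and force decay at infinity and near $Sing(F)$, then $N_F=\overline{h_F(M_F)}$) to the real setting, and correctly identifies the one genuine divergence, namely that over $\R$ the Jelonek set can have codimension $1$ so the pseudomanifold conclusion must be dropped. The paper itself gives no independent proof of this statement beyond the citation and that remark, so your reconstruction matches its intent.
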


In order to understand better the variety $N_F$, we give here an example in the real case.

\subsection{Example}

\begin{example} {\cite{Thuy1}} \label{exThuy1}
{\rm Let $F: \R^2_{(x,y)} \to \R^2_{(\alpha, \beta)}$ be the polynomial mapping defined by
$$F(x,y) = (x, x^2y(y+2)).$$

Let us construct the variety $N_F$ in this case. By an easy computation, we find:

 $\qquad \qquad \qquad \qquad \qquad Sing(F) = \{ (x, y) \in \R^2_{(x, y)} : x = 0 \text{ or } y = -1 \},$ 

$\qquad \qquad \qquad \qquad \qquad K_0(F) = \{ (\alpha, \beta) \in \R^2_{(\alpha, \beta)} : \beta = -\alpha^2 \},$

$\qquad \qquad \qquad \qquad \qquad S_F = \{ (0, \beta) \in \R^2_{(\alpha, \beta)} : \beta \geq 0 \}.$

We see that $\R^2$ is divided into four open subsets $U_i$ by $Sing(F)$ (see the Figure 1a). The mapping $F$ is a diffeomorphism on each $U_i$, for $i = 1, \ldots , 4$. Observe that $U_i$ is closed in $M_F$ so that we can chose $V_i = U_i$ for $i = 1, \ldots, 4$ (see section \ref{Valette1}).  There  exist  
Nash functions  $\psi_i : M_F \to {\mathbb R}$ such that each $\psi_i $ is positive on $U_i$ and  negative on $U_j$ if $j \neq i$. Since $N_F$ is the closure of $h_F(M_F)$ where $h_F= (F, \psi_1, \ldots, \psi_4)$, then $N_F$ have 4 parts ${(N_F)}_1, \ldots, {(N_F)}_4$ where ${(N_F)}_i$ is the closure of $h_F(U_i)$ for $i = 1, \ldots, 4$.

Again, an easy computation shows:
$$ F(U_1) = F(U_2) = \{(\alpha, \beta) \in \R^2_{(\alpha, \beta) } : \alpha > 0, \beta > - \alpha^2 \},$$  
$$  F(U_3) = F(U_4) = \{(\alpha, \beta) \in \R^2_{(\alpha, \beta) }: \alpha < 0, \beta > - \alpha^2 \}.$$

Each ${(N_F)}_i$ is $F(U_i)$ embedded in $\R_{(\alpha, \beta)} \times \R^4$ but${(N_F)}_i$ does not lie  in the plane $\R_{(\alpha, \beta)}$ anymore, it is ``lifted" in $\R_{(\alpha, \beta)} \times \R^4$. However, the part contained in $K_0(F) \times S_F$ still remains in the plane $\R_{(\alpha, \beta)}$ since the functions $\psi_i$ tend to zero at infinity and near $Sing(F)$ (see the Figure 1b).

Now we want to know how the parts ${(N_F)}_i$ are glued together. Using diagram (\ref{dia1}), for any point $a = (\alpha, \beta) \in \R^2 \setminus K_0(F)$ the cardinal of  $\pi_F^{-1}(a) \setminus ((K_0(F) \cup S_F) \times \{ 0_{\R^4} \})$ is equal to the cardinal of $F^{-1}(a)$ since $h_F$ is bijective.
Consider now the equation 
$$F(x,y) = \left( x, x^2y^2 + 2x^2y \right)= (\alpha, \beta)$$
where $\beta \neq - \alpha^2.$ We have 
\begin{equationth} \label{equaex}
\alpha^2 y^2 +2\alpha^2y - \beta= 0. 
\end{equationth}
As the reduced discriminant is $\Delta'=\alpha^4 + \alpha^2\beta = \alpha^2(\alpha^2 +\beta)$, then

1) if $\beta< -\alpha^2 $, the equation (\ref{equaex}) does not have any solution,

2) if $\beta> -\alpha^2$, the equation (\ref{equaex}) has two solutions. 

Then ${(N_F)}_1$ and ${(N_F)}_2$ are glued together along $(K_0(F) \cup S_F) \times \{ 0_{\R^4} \}$.
Similarly, ${(N_F)}_3$ and ${(N_F)}_4$ are glued together along $(K_0(F) \cup S_F) \times \{ 0_{\R^4} \}$ (see the Figure 1c).

}

\begin{figure}[h!] \label{DessinEx1Thuy}
\begin{center}
\includegraphics[scale=0.6]{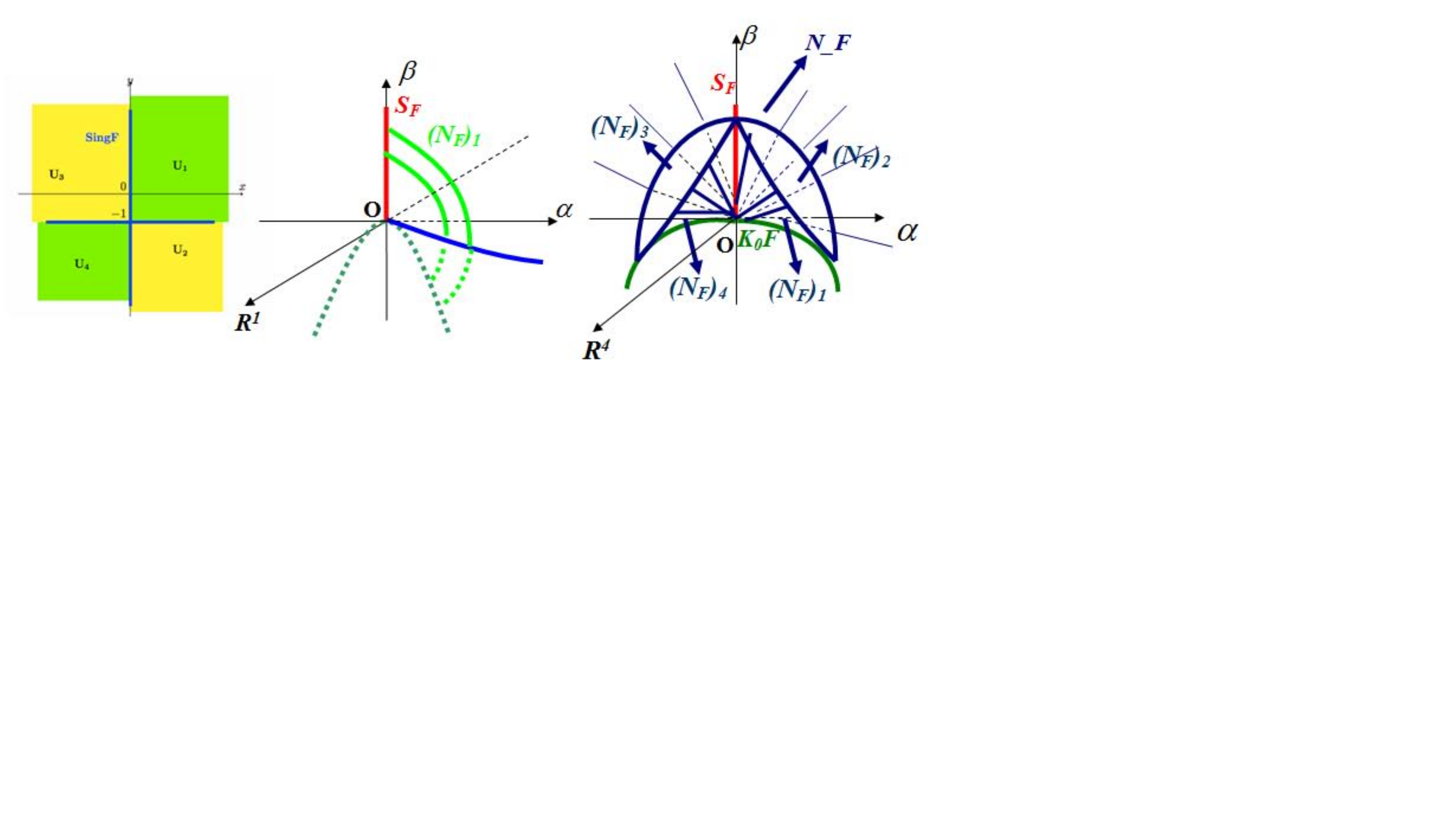}
\vskip-6cm 
\hskip -3truecm \textcolor{red}{1a}\hskip 3truecm  \textcolor{red}{1b} \hskip 4truecm\textcolor{red}{1c} 
\caption{The variety $N_F$.}
\end{center}\end{figure}
\end{example}

\medskip

\subsection{Homology and intersection homology of $N_F$}
\begin{lemma} [\cite{Valette}] \label{valette3}
{\it Let $F: \C^2 \to \C^2$ be a polynomial mapping. There exists a natural stratification of the variety $N_F$,  by even (real) dimension strata, which is locally topologically trivial along the strata.}
\end{lemma}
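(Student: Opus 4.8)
The plan is to exhibit an explicit semi-algebraic filtration of $N_F$ and then verify the two required properties: that consecutive differences are manifolds of even real dimension, and that the filtration is locally topologically trivial along the strata. Since $N_F = \overline{h_F(M_F)}$ and $h_F$ is a semi-algebraic bi-Lipschitz homeomorphism onto $N_F \setminus ((S_F \cup K_0(F)) \times \{0_{\R^p}\})$, the regular part is already a smooth real manifold of dimension $4$ (it is semi-algebraically homeomorphic to the open subset $M_F = \C^2 \setminus Sing(F)$ of $\R^4$). So the only place where singularities can occur is inside $(S_F \cup K_0(F)) \times \{0_{\R^p}\}$, which by Proposition \ref{valette2} has real dimension at most $2$. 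Thus I would set $N_F = Y_4 \supset Y_2 \supset Y_1 \supset Y_0 \supset Y_{-1} = \emptyset$, where $Y_2 := (S_F \cup K_0(F)) \times \{0_{\R^p}\}$ (a real surface, since $S_F$ and $K_0(F)$ are complex curves, hence real surfaces, in $\C^2 = \R^4$), and then refine $Y_2$ further: take $Y_1$ to be the singular locus of the surface $S_F \cup K_0(F)$ together with the intersection $S_F \cap K_0(F)$, i.e. the (finite, hence $0$- or $1$-dimensional) bad set, and $Y_0$ a finite set of points. The key point that makes the dimensions even is that everything in sight is complex-algebraic: $S_F$ is a complex curve by Jelonek's Theorem \ref{Jelonek1} (with $n=2$, so pure dimension $1$ over $\C$, i.e. real dimension $2$), $K_0(F)$ is the image of the complex hypersurface $Sing(F)$ hence also a complex curve, and the strata obtained by intersecting/removing complex curves are complex manifolds of complex dimension $1$ or $0$, i.e. real dimension $2$ or $0$ — never odd.

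Next I would check that the strata so defined are indeed topological manifolds of the stated dimensions. The top stratum $S_4 = N_F \setminus Y_2$ is a $4$-manifold as explained. For the $2$-dimensional strata I would take the connected components of $Y_2 \setminus Y_1$ where $Y_1$ consists of the singular points of the curve $S_F \cup K_0(F)$ (as a complex variety) and the finitely many points of $S_F \cap K_0(F)$; on the complement of this finite set, $S_F \cup K_0(F)$ is a smooth complex curve, hence a smooth real $2$-manifold. The remaining finite set of points I would place in $Y_0$, with $Y_1 = Y_0$ (so there is no stratum of dimension $1$, consistent with "even dimensional strata"). The codimension of $Sing(N_F)$ is then at least $2$ (in fact exactly $2$), so $N_F$ is genuinely a pseudomanifold, as already asserted in Proposition \ref{valette2}.

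The main work — and the main obstacle — is establishing local topological triviality along the strata, equivalently exhibiting the local conical structure $h: U_x \to (0,1)^i \times cL$ at each point $x \in Y_2$. For this I would invoke the standard fact recalled at the end of the introduction: a Whitney stratification is automatically locally topologically trivial along the strata (Thom–Mather isotopy theorem). So it suffices to show that the filtration above can be chosen to satisfy the Whitney $(b)$ condition. Since $N_F$ is semi-algebraic, one may always refine any given semi-algebraic filtration to a Whitney stratification; the real content is that this refinement does not introduce odd-dimensional strata, which again follows because the singular set $Y_2$ and all the loci arising in the Whitney stratification of a complex algebraic set (respectively, of a set that is complex-algebraic near its singular locus) are themselves complex-algebraic and hence even-dimensional. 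Concretely, I would argue: away from $Y_2$ there is nothing to stratify; along $Y_2$, one Whitney-stratifies the complex curve $S_F \cup K_0(F)$ (its singular points form a finite set, so the Whitney condition between the $2$-dimensional part and the $0$-dimensional part is essentially automatic after possibly enlarging the finite set), and then checks the Whitney condition of the pair $(S_4, S_2)$ for each $2$-dimensional stratum $S_2 \subset Y_2 \setminus Y_1$ — if it fails at finitely many points one throws those points into $Y_0$, and if it fails along a curve one must argue, using the complex-analytic nature of $N_F$ near $Y_2$ coming from the construction of $h_F$ (the $\psi_i$ are Nash and vanish to high order at infinity and along $Sing(F)$, so $N_F$ is, near $Y_2$, a real form of a complex-analytic set), that such a curve is again complex-algebraic. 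I expect the delicate point to be precisely this last verification: controlling how the finitely many sheets $(N_F)_i = \overline{h_F(U_i)}$ approach and glue along $(S_F \cup K_0(F)) \times \{0_{\R^p}\}$, and showing the tangent-space degeneration locus is even-dimensional; the example in Section \ref{exThuy1}, where two sheets meet cleanly along the glued curve, is the model case to keep in mind, and the general argument should reduce, after passing to a local curve selection, to the normal form of the Jelonek set described in Jelonek's papers together with the local structure of $Sing(F)$ and $K_0(F)$.
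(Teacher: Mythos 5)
Your filtration is missing the essential ingredient. The paper (quoting from [Va-Va]) takes the lowest stratum to be $\bigl(Sing(S_F\cup K_0(F))\cup B\bigr)\times\{0_{\R^p}\}$ where $B=S_{F\vert_{F^{-1}(S_F)}}$ is the \emph{Jelonek set of the restriction} of $F$ to $F^{-1}(S_F)$; your $Y_1$ contains only $Sing(S_F\cup K_0(F))$ (note $S_F\cap K_0(F)$ is already in the singular locus, so adding it changes nothing). The set $B$ is not a Whitney-failure afterthought: it is exactly the obstruction to $F\vert_{F^{-1}(S'_{2})}$ being proper over the $2$-dimensional stratum, and that properness is what makes $\pi_F$ a covering map over the stratum, hence what makes $Y_2\setminus Y_1$ a manifold with a locally cone-like neighborhood in $N_F$. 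At a point $y\in B$ there are sheets of $N_F$ that arrive ``from infinity'' onto $(S_F\cup K_0(F))\times\{0_{\R^p}\}$, so the link $L$ in the local model $(0,1)^2\times cL$ jumps as one crosses $y$; your filtration keeps $y$ in the open stratum and therefore cannot be locally topologically trivial. This is a concrete, non-generic failure, not something that can be absorbed by ``throwing in finitely many bad points'' without identifying them.

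The second gap is the one you flag yourself: you invoke ``complex-algebraic hence even-dimensional'' to control the Whitney-failure locus, but $N_F$ is not complex algebraic — the extra coordinates are real Nash functions $\psi_i$, so a priori the set where the $(b)$-condition fails for the pair (top stratum, $Y_2\setminus Y_1$) is only real semi-algebraic and could have a $1$-dimensional component inside $Y_2$. The paper does not try to Whitney-stratify $N_F$ directly. Instead (see the proof of Theorem \ref{valettethuy1}, of which this lemma is the $n=2$ case) it builds a Whitney filtration $(\mathcal{W})$ of $\R^{2n}$ compatible with $S_F$ — where complex-algebraicity and Whitney's classical theorem genuinely apply — then pulls it back by $\pi_F$ to $(\mathcal{V})$, and finally \emph{lifts} the Whitney $(b)$ condition from $(\mathcal{W})$ to $(\mathcal{V})$. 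That last step is not automatic; it works only because the $\psi_i$ can be chosen (by multiplying by a high power of $1/(1+|x|^2)$, via the \L ojasiewicz inequality) so that $\tilde\psi_i(x'_m)\ll |x'_m-y'_m|$ and $\partial_x\tilde\psi_i\to 0$ along the relevant sequences — exactly the estimates (\ref{*}) and (\ref{**}). Without this quantitative control on the Nash coordinates, the argument does not close; this decay estimate, together with the inclusion of $B$ in the filtration, is the actual content of the proof and is absent from your proposal.
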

In fact, the stratification of the variety $N_F$ is showed in \cite{Valette} to be 
$$N_F \supset (S_F \cup K_0(F)) \times \{ 0_{\R^{p}} \} \supset (Sing(S_F \cup K_0(F)) \cup B) \times \{ 0_{\R^{p}}\} \supset \emptyset,$$
where $B = S_{F \vert_{F^{-1}(S_F)}}$. 

\begin{theorem} [\cite{Valette}] \label{valette4}
{\it Let $F: \C^2 \to \C^2$ be a polynomial mapping with nowhere vanishing Jacobian. The following conditions are equivalent:

(1) $F$ is non proper,

(2) $H_2(N_F) \neq 0$,

(3) $IH_2^{\overline{p}}(N_F) \neq 0$ for any perversity $\overline{p}$,

(4) $IH_2^{\overline{p}}(N_F) \neq 0$ for some perversity $\overline{p}$.}

\end{theorem}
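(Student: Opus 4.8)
The plan is to establish the chain of implications $(1)\Rightarrow(2)\Rightarrow(4)\Rightarrow(3)\Rightarrow(1)$ (or some convenient rearrangement), using as the main engine the bi-Lipschitz homeomorphism $h_F$ of Proposition~\ref{valette2}, the explicit stratification recorded after Lemma~\ref{valette3}, and the $\mathcal{L}^\infty$-cohomology description of intersection cohomology in the maximal perversity (Theorem~\ref{thm_intro_linfty}). First I would fix the natural stratification $N_F \supset (S_F \cup K_0(F))\times\{0\} \supset (Sing(S_F\cup K_0(F))\cup B)\times\{0\}\supset\emptyset$; since $F$ has nowhere vanishing Jacobian, $K_0(F)=\emptyset$, so the singular locus is just $S_F\times\{0\}$, a (real) $2$-dimensional semi-algebraic set, and $N_F$ is a $4$-dimensional pseudomanifold whose only strata have dimensions $4$, $2$ and (possibly) $0$. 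By Lemma~\ref{valette3} this stratification is locally topologically trivial, so $N_F$ is a genuine stratified pseudomanifold and Proposition~\ref{proinvariance} applies: all the intersection homology groups in question are well defined and independent of the stratification.

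Next, for the implication ``$F$ proper $\Rightarrow$ all homology/intersection-homology vanish'': if $F$ is proper then $S_F=\emptyset$, hence $N_F=\overline{h_F(M_F)}=h_F(M_F)$ is a smooth semi-algebraic $4$-manifold bi-Lipschitz (in particular homeomorphic) to $M_F=\R^{2n}\setminus Sing(F)=\R^{4}$ (again $Sing(F)=\emptyset$). Thus $N_F$ is contractible, so $H_2(N_F)=0$ and, since there are no singular strata, $IH_k^{\overline p}(N_F)=H_k(N_F)=0$ for every $k>0$ and every perversity. This gives $\neg(1)\Rightarrow\neg(2)\wedge\neg(3)\wedge\neg(4)$, i.e. each of $(2),(3),(4)$ implies $(1)$; and for the middle-degree statements one only needs that $IH_2$ is nontrivial to force non-properness. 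It remains to prove $(1)\Rightarrow(2)$ and $(1)\Rightarrow(3)$ and to note $(3)\Rightarrow(4)$ is trivial.

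For $(1)\Rightarrow(3)$ I would argue that, when $F$ is not proper, $S_F$ is a nonempty $\C$-uniruled curve in $\C^2$ by Theorem~\ref{Jelonek1}, so $S_F\times\{0\}$ is a nonempty real surface, and one constructs a nonzero class in $IH_2^{\overline p}(N_F)$ for every $\overline p$. The cleanest route is to use Theorem~\ref{thm_intro_linfty}: one compactifies $N_F$ appropriately and shows that integration against a suitable bounded closed $2$-form on $Reg(N_F)$ — pulled back via $h_F$ from an area-type form on $M_F$ detecting a non-contractible loop at infinity, or directly from the ``sheets'' of $N_F$ glued along $S_F\times\{0\}$ as in Example~\ref{exThuy1} — is nonzero on a $\overline t$-allowable cycle, giving $IH_2^{\overline t}(N_F)\ne0$; since $\overline t$ is the maximal perversity and for a pseudomanifold with only even-codimensional strata the intersection homology is independent enough, one deduces nonvanishing for all $\overline p$ (alternatively by Poincaré duality~\eqref{eq poincare duality} between $\overline p$ and $\overline q$, combined with the structure of the link). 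The implication $(1)\Rightarrow(2)$ should follow by the same geometric picture, realizing a $2$-cycle supported on a pair of sheets joined along a compact part of $S_F$, or by comparing $H_2$ with $IH_2$ via the canonical map, which is an isomorphism in low degrees given the codimension of the singular locus.

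The main obstacle I anticipate is the nonvanishing direction $(1)\Rightarrow IH_2\neq 0$: one must actually produce, in a controlled semi-algebraic way, a middle-dimensional allowable cycle in $N_F$ that is not a boundary, using only the asymptotic data of $F$. The key input will be Jelonek's theorem that $S_F$ is $\C$-uniruled (Theorem~\ref{Jelonek1}): a rational curve through a point of $S_F$ lifts, via the various local inverses of $F$ supplied by Lemma~\ref{Valette1}, to several branches in $M_F$ whose $h_F$-images close up along $S_F\times\{0\}$ into a $2$-sphere-like cycle in $N_F$; its nontriviality in $IH_2$ is then detected either by $\mathcal{L}^\infty$-cohomology (Theorem~\ref{thm_intro_linfty}) or by an explicit linking/intersection pairing with the singular surface. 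Making this gluing precise, checking $\overline p$-allowability of the resulting chain for every perversity, and verifying it represents a nonzero class will be the technical heart of the argument; the remaining implications are formal consequences of contractibility of $\R^4$ and the definitions.
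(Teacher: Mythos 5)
Your reduction to the hard direction is correct and matches the paper: with nowhere vanishing Jacobian one has $Sing(F)=K_0(F)=\emptyset$, so properness gives $S_F=\emptyset$, $N_F=h_F(M_F)\cong\R^4$, and all the groups vanish; the contrapositive settles $(2),(3),(4)\Rightarrow(1)$, and $(3)\Rightarrow(4)$ is trivial. The genuine gap is in the nonvanishing direction $(1)\Rightarrow(2),(3)$, which you only outline and which, as outlined, does not obviously work. You propose to take a rational curve $\varphi:\C\to S_F$ (using Jelonek's $\C$-uniruledness), lift it through the local inverses of $F$, and glue the $h_F$-images along $S_F\times\{0\}$ into a compact $2$-cycle, then detect nontriviality via $\mathcal L^\infty$-cohomology or an intersection pairing. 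Each step hides a real difficulty: a polynomial curve $\varphi(\C)\subset S_F$ is non-compact; the fibers $F^{-1}(\varphi(t))$ over $S_F$ may be empty; it is not shown that the lifted branches close up into a \emph{compact} chain, much less a $\bar p$-allowable cycle for every $\bar p$; and the remark that the canonical comparison $H_2\to IH_2$ is ``an isomorphism in low degrees given the codimension of the singular locus'' is not a general fact.

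The paper's argument (given for the generalization, Theorem~\ref{valettethuy2}, which contains $n=2$ as a special case) is quite different. It does not lift curves from $S_F$: it takes a complex Puiseux arc $\gamma(z)=uz^\alpha+\cdots$ in the \emph{source}, tending to infinity as $z\to 0$, chosen so that $F(\gamma(z))$ converges to a generic $x_0\in S_F$. Since the $\psi_i$ vanish at infinity, $h_F\circ\gamma$ extends continuously to $z=0$, and the image of a small oriented triangle is a singular $2$-simplex $c$ meeting $S_F\times\{0\}$ in the single point $(x_0,0)$; the allowability condition $\dim(|c|\cap S_F\times\{0\})\le 2-2+p_2=0$ is then automatic for every perversity. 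After bounding $\partial c$ inside $N_F\setminus(S_F\times\{0\})\cong\R^{2n}$ one obtains an allowable $2$-cycle $\sigma$, and its nontriviality (simultaneously in $H_2$ and in every $IH_2^{\bar p}$) is proved by contradiction: if $\sigma=\partial\tau$, intersecting with large spheres $\S^{2n-1}(0,R)$, rescaling, and applying the deformation retraction onto $C_\infty(A)=\S^1.a$ together with Lemma~\ref{valette5} and Lemma~\ref{lemmathuy1}, forces the circle $\S^1.a$ to bound inside $V\cap\S^{2n-1}(0,1)$, where $V$ is the zero locus of the leading forms $\hat F_i$; but $\dim_\R(V\cap\S^{2n-1}(0,1))\le 1$ (automatic for $n=2$), a contradiction. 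Your $\mathcal L^\infty$-cohomology suggestion is plausible in spirit (Theorem~\ref{thm_intro_linfty} does identify $IH^{\bar t}_*$ with $H^*_\infty(Reg)$), but producing the required bounded closed $2$-form with nonzero integral is not done and is not easier than the paper's dimension count at infinity.
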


We notice that, for a given polynomial map $F: \C^n \to \C^n$,
 whatever the choice of $N_F$,
 the homology and intersection homology groups $H_2(N_F)$ and
$IH_2^{\overline{p}}(N_F)$ are either $0$ or not $0$.





\section{Results}
The following theorem generalizes the Lemma \ref{valette3} and shows existence of suitable 
stratifications of the set $S_F$ in the  case of a polynomial mapping $F: \C^n \to \C^n.$

\begin{theorem} \label{valettethuy1}
{\it Let $F: \C^n \to \C^n$ be a generically finite polynomial mapping with nowhere vanishing Jacobian. There exists a filtration of $N_F$: 
$$ \qquad  N_F = V_{2n} \supset V_{2n-1}  \supset V_{2n-2}  \supset \cdots \supset  V_1 \supset V_0 \supset V_{-1} = \emptyset $$
\noindent such that :
\begin{enumerate}
\item[1)] for any  $i < n$,  $V_{2i+1} = V_{2i}$,
\item[2)] the corresponding stratification satisfies the Whitney $(b)$ condition.
\end{enumerate}  
}
\end{theorem}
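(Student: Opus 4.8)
The plan is to construct the filtration explicitly from the stratification of the Jelonek set $S_F$ together with the critical‐value set $K_0(F)$, pulled back to $N_F$ via the commutative diagram (\ref{dia1}). First I would recall that by Proposition \ref{valette2} the singular locus of $N_F$ sits inside $\Sigma := (S_F \cup K_0(F)) \times \{0_{\R^{p}}\}$, and that $h_F$ is a semi-algebraic bi-Lipschitz homeomorphism from $M_F = \R^{2n}\setminus Sing(F)$ onto $N_F \setminus \Sigma$; in particular $N_F \setminus \Sigma$ is a smooth real $2n$-manifold, hence already a union of even-dimensional strata, and the entire difficulty is concentrated on $\Sigma$. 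Since $F$ is generically finite with nowhere vanishing Jacobian, $Sing(F) = \emptyset$, so $K_0(F) = \emptyset$ and $\Sigma = S_F \times \{0_{\R^{p}}\}$. By Theorem \ref{Jelonek1}, $S_F$ is either empty (in which case $N_F$ is smooth and there is nothing to prove) or a pure $(n-1)$-complex-dimensional $\C$-uniruled algebraic variety, hence of real dimension $2(n-1) = 2n-2$.

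Next I would build the filtration of $\Sigma$ by choosing a Whitney stratification of the complex algebraic set $S_F$. Because $S_F$ is a complex algebraic variety, any complex Whitney stratification of it has only even real-dimensional strata (complex submanifolds have even real dimension), and such a stratification exists by the standard theory of Whitney stratifications of complex algebraic sets. Set $V_{2i} := (S_F)_{(i)} \times \{0_{\R^{p}}\}$ for $i \le n-1$, where $(S_F)_{(i)}$ denotes the $i$-th complex-dimensional skeleton of this complex Whitney stratification, and put $V_{2n} = N_F$, $V_{2n-1} = V_{2n-2} = \Sigma$. Then declare $V_{2i+1} = V_{2i}$ for all $i < n$, which forces condition (1). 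Condition (2) for pairs of strata both contained in $\Sigma$ follows immediately from the fact that we started with a Whitney stratification of $S_F$ (the extra factor $\{0_{\R^{p}}\}$ is irrelevant, and $V_{2n-1}=V_{2n-2}$ just means the skeleton filtration has a repeated term, which does not affect the Whitney condition — the strata are the same).

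The remaining and main point is to verify the Whitney $(b)$ condition for the one new pair of strata: the big open stratum $N_F \setminus \Sigma$ versus any stratum $S \subset \Sigma$. Here I would argue as follows. Take $y \in S$, a sequence $x_k \in N_F \setminus \Sigma$ with $x_k \to y$, a sequence $y_k \in S$ with $y_k \to y$, assume the tangent planes $T_{x_k}(N_F\setminus\Sigma)$ converge to some plane $T$ and the secant directions $\overline{x_k y_k}$ converge to a line $\lambda$; I must show $\lambda \subset T$. The key observation is that, since $S$ is a complex submanifold of $S_F$, its real tangent space is a complex subspace, so it suffices to show that the limit plane $T$ contains the full complex tangent space $T_y S$ (then $\lambda$, being the limit of secants inside the complex manifold $S$, lies in $T_y S \subset T$) — wait, more carefully: $\lambda$ is a limit of directions $\overline{x_k y_k}$ with $x_k \notin S$, so one must control these secants using the bi-Lipschitz structure. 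The natural route is to transport the question through $h_F$ to $M_F$: $h_F$ being bi-Lipschitz means it distorts secant directions in a controlled (though not trivial) way, and one reduces to a statement about how the "boundary at infinity / at $Sing(F)$" is approached in $M_F$. I expect the cleanest argument is instead to invoke the \emph{generic} Whitney condition for a fixed stratum plus a dimension count: refine the stratification of $S_F$ if necessary so that Whitney $(b)$ holds for $(N_F\setminus\Sigma, S)$ on a dense open subset of each $S$, then stratify the bad locus further; this is exactly the classical existence theorem for Whitney stratifications applied to the pair $(N_F, \Sigma)$, whose strata we may then intersect with the complex skeleton of $S_F$ to retain even-dimensionality. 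The main obstacle is precisely this last step — ensuring that the Whitney stratification of the semi-algebraic (not complex-algebraic) set $N_F$ adapted to $\Sigma$ can be chosen so that its trace on $\Sigma$ refines a \emph{complex} stratification of $S_F$, i.e. that no odd-dimensional strata are forced inside $\Sigma$; this works because $S_F$ is genuinely complex algebraic and Whitney stratifications can be refined while keeping all strata inside the original complex strata, which are even-dimensional, so the refinement only subdivides within even-dimensional pieces and produces again even-dimensional strata.
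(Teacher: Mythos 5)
There are three genuine gaps in your proposal, and together they amount to missing the essential mechanism of the proof.

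\textbf{(i) You only stratify $S_F$; the paper stratifies by the Jelonek sets of the restrictions.} Your filtration of $\Sigma = S_F\times\{0_{\R^p}\}$ is a complex Whitney stratification of $S_F$ alone. The paper's filtration $(\mathcal W)$ of the target $\R^{2n}$ is built by decreasing induction and, at each step, the skeleton $W_{2k-2}$ is forced to contain not only $Sing(W_{2k})$ and the Whitney bad locus, but also $S_{F\vert M_{2k}}$ where $M_{2k}=F^{-1}(W_{2k})\cap M_F$. This last set is the crucial ingredient: it is exactly what guarantees that $F$ restricted to $F^{-1}(S'_{2i})$ is \emph{proper}, hence that $\pi_F$ is a covering map over each open stratum, hence that $V_{2i}\setminus V_{2i-2}$ is a manifold at all, and that a connected component of $\pi_F^{-1}(Z)$ either lies in $Z\times\{0_{\R^p}\}$ or is disjoint from $S_F\times\{0_{\R^p}\}$. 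With your filtration, points of $S_F$ over which fibers of $F\vert_{F^{-1}(S_F)}$ escape to infinity are not isolated into lower skeleta, and $\pi_F^{-1}(S'_{2i})\setminus(S_F\times\{0_{\R^p}\})$ need not be a manifold — the construction breaks already before one gets to Whitney. That $S_{F\vert M_{2k}}$ is a complex hypersurface (Jelonek's Theorem~\ref{Jelonek2}) is also what rescues even-dimensionality here.

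\textbf{(ii) Your even-dimensionality argument is not valid as stated.} You argue that refining within even-dimensional complex strata produces even-dimensional strata. This is false in general: a semi-algebraic refinement of an even-dimensional real manifold can create odd-dimensional pieces (cut $\R^2$ along a line). The reason the paper gets even dimensions is that each set it throws into the skeleton — $Sing(W_{2k})$, $S_{F\vert M_{2k}}$, and the Whitney bad locus $A_{2k}$ for the \emph{complex} stratification $(\mathcal W)$ of $\R^{2n}$ — is a complex algebraic variety (the third by Whitney's theorem on analytic varieties). You would need to prove that the bad locus for the Whitney condition of the pair $(N_F\setminus\Sigma, S)$, which lives in the purely real semi-algebraic ambient $\R^\nu$, is contained in a complex subvariety of $S_F$; this is not a consequence of any standard existence theorem and is precisely what the paper's construction is designed to circumvent.

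\textbf{(iii) The mixed Whitney $(b)$ condition is not actually proved.} For the pair where $X\cap(S_F\times\{0_{\R^p}\})=\emptyset$ and $Y\subseteq S_F\times\{0_{\R^p}\}$, you gesture at "transporting through $h_F$" and the "classical existence theorem", but no argument is given that controls the secant direction $\overline{x_m y_m}$. The paper's verification hinges on a quantitative choice of the Nash functions $\psi_i$: after multiplying each $\psi_i$ by a large power of $1/(1+|x|^2)$ via the \L ojasiewicz inequality, one gets the estimate $\psi_i(z_m) \ll d(F(z_m);Y')$ and the vanishing of the first derivatives of $\tilde\psi$, so that the secants and limit tangent planes in $N_F$ both flatten onto their projections in $\R^{2n}$, where Whitney $(b)$ for $(\mathcal W)$ applies. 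Nothing in the bi-Lipschitz property of $h_F$ alone gives this; the \emph{freedom to shrink} the $\psi_i$ is an explicit design feature of the construction of $N_F$ and has to be invoked.
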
 


\begin{proof}[Proof]
We have the following elements 

+ Thanks to Sard Theorem, we have $\dim_{\C} Sing(S_F) \leq n-2$, {\it i.e. } $\dim _{\R} Sing(S_F)  \leq 2n - 4$. 

+ Let $M_{2n-2} = F^{-1}(S_F) \cap M_F$. The mapping $F$ restricted to  $M_{2n-2}$ is dominant.
  Thanks to Jelonek's  Theorem  (Theorem \ref{Jelonek2}), 
we have $\dim_{\C} S_{F_{\vert M_{2n-2}}} = n -2$  
 (since  $\dim_{\C} M_{2n-2} = n-1)$.  Thus, we obtain $\dim_{\C} S_{F_{\vert M_{2n-2}}} = 2n -4.$  

+ Thanks to Whitney's Theorem (Theorem 19.2, Lemma 19.3, \cite{Whitney}), the set  $B_{2n-2}$ of points $x \in S_F$ at which the Whitney $(b)$ condition fails is contained in a complex algebraic variety 
of complex dimension smaller than $n-1$, so $\dim_{\R} B_{2n-2} \leq 2n -4$.

\noindent We will define a filtration $(\mathcal{W})$ of $\R^{2n}$ by algebraic varieties and compatible with $S_F$:
$$ (\mathcal{W}) : \qquad W_{2n} = \R^{2n} \supset W_{2n-1} \supset W_{2n-2} = S_F \supset \cdots \supset W_{2k+1} \supset W_{2k} \supset \cdots \supset W_1 \supset W_0 \supset \emptyset$$ 
by decreasing induction on $k$. Assume that $W_{2k}$ has been constructed. If  $\dim_{\R} W_{2k} < 2k$ then we put
$$ W_{2k-1} = W_{2k-2} =W_{2k} $$
otherwise we denote $M_{2k} = F^{-1}(W_{2k}) \cap M_F$ and $W'_{2k} =  W_{2k} \setminus (Sing(W_{2k})\cup  S_{F_{\vert M_{2k}}})$. 
We put 
\begin{equationth} \label{filtrationSF}
W_{2k-1} = W_{2k-2}= Sing(W_{2k}) \cup S_{F_{\vert M_{2k}}} \cup A_{2k}, 
\end{equationth}
where  $A_{2k}$ is the smallest algebraic set which contains the set: 
$$B_{2k} = \left\{ x \in W'_{2k}  : 
\begin{matrix} 
 \text{ if $x\in W_{h}$ with $h>2k$ then   } \hfill \cr
\text{ the Whitney  $(b)$ condition fails at $x$ for the pair $(W'_{2k}, W_h)$}\\
\end{matrix}
\right\}.$$

Now, consider the filtration $(\mathcal{V})$ of $N_F$
$$(\mathcal{V}) : \qquad  N_F = V_{2n}  \supset V_{2n-1} \supset V_{2n-2} \supset \cdots \supset  V_{2k+1} \supset V_{2k} \supset \cdots 
\supset V_1 
\supset V_0 \supset \emptyset$$
where $V_i = \pi_F^{-1} (W_i)$ and $\pi_F$ is the canonical projection from $N_F$ to $\R^{2n}$, on the first $2n$ coordinates (see diagram (\ref{dia1})).

\medskip

Let $S'_{2i} = W_{2i} \setminus W_{2i-2}$. 
We claim that $F_{\vert F^{-1}(S'_{2i})}$ is proper. 
This is obvious if  $S'_{2i}$ is empty. 
If $S'_{2i}$ is not empty, suppose that there exists a sequence  $\{ x_l \}$ in $F^{-1}(S'_{2i})$ such that $F(x_l)$ 
 goes to a point  $a$ in $S'_{2i}$. 
We have to show that the sequence $\{ x_l \}$ does not go to  infinity. Since $S'_{2i} = W_{2i} \setminus W_{2i -2}$, 
where $W_{2i-2} = Sing(W_{2i}) \cup S_{F_{\vert M_{2i-2}}} \cup A_{2i}$, 
 we have $a \notin S_{F_{\vert M_{2i-2}}}$. 
If  $x_l$ tends to infinity then $a \in  S_{F_{\vert F^{-1}(S'_{2i})}} $, which is a  contradiction.

Let $X$ be a connected component of $\pi_F^{-1}(Z)$, where $Z \subseteq W_{2i} \setminus W_{2i-2}$. We have $X \subseteq V_{2i} \setminus V_{2i-2}$. We claim that either $X \subseteq Z \times \{ 0_{\R^p}\}$ or $X \cap (S_F \times \{ 0_{\R^p}\}) = \emptyset.$ Assume that there exist $x' \in X$ but $x' \notin Z \times \{ 0_{\R^p}\}$ and $x'' \in X \cap (S_F \times \{ 0_{\R^p}\})$. Then we have $x'' = (x, 0_{\R^p})$, where $x \in S_F$. There exists a curve $\gamma(t) = (\gamma_1(t), \gamma_2(t)) \subseteq X$ where $\gamma_1(t) \subseteq \R^n$ and $\gamma_2(t) \subseteq \R^p$,  such that $\gamma(0) = x'$  and $\gamma(1) = x''$. Let us call $u=\gamma(t_0)$ the first point at which $\gamma$ meets $S_F \times \{ 0_{\R^p}\}$. Thus, we have $\gamma_2(t) \neq 0$ whenever $t < t_0$ and $h_F^{-1}(\gamma(t))$ is in $M_{2i}$, for $t< t_0$.  Moreover, $F(h_F^{-1}(\gamma(t)))=\pi_F(\gamma(t))$ 
 tends to $\pi_F(u)$ and $h_F^{-1}(\gamma(t))$ tends to infinity as $t$ tends to $t_0$. Hence, $\pi_F(u) \in S_{F\vert M_{2i}} \subset W_{2i-2}$, so $u$ is in $V_{2i-2}$,  contradicting $u \in X\subset V_{2i}\setminus V_{2i-2}$.

\medskip

Let us show that $S_{2i}:=V_{2i}\setminus V_{2i-2}$ is a smooth manifold, for all $i$. Because $F_{ \vert F^{-1}(S'_{2i})}$ is proper, the restriction of $\pi_F$ to $\pi_F^{-1}(S'_{2i}) \setminus (S_F \times \{ 0_{\R^p}\}) = h_F (F^{-1}(S'_{2i}))$ is proper. Consequently, $\pi_F$ is a covering map on $S_{2i}$. This implies that $S_{2i}$ is a smooth manifold.

\medskip

Observe that in the case where $\overline{X}\cap (S_F\times \{0_{\R^p}\})$ is nonempty then it is included in $W_{2i-2} \times \{ 0_{\R^p}\}$, if $\dim X=2i$, since every point of $\overline{X}\cap (S_F\times \{0_{\R^p}\})$ is a point of  $S_{F\vert M_{2i}} \subseteq W_{2i-2}$. As $\pi_F$ is a covering map on $N_F \setminus (S_F \times \{0_{\R^p}\})$, this implies that  $S'_{2i} \times \{ 0_{\R^p}\}$ is open in $\pi_F^{-1}(S'_{2i})$. 

\medskip

Let us prove that the filtration $(\mathcal{V})$ defines  a Whitney stratification: at first, we prove that the stratification $(\mathcal{W})$ is a Whitney stratification. If the stratum $S'_{2i} = W_{2i} \setminus W_{2i-2}$ is not empty, then by (\ref{filtrationSF}), we have 
$$S'_{2i} = W_{2i} \setminus W_{2i-2} \subset W_{2i} \setminus A_{2i} \subset W_{2i} \setminus B_{2i}.$$
This shows that the stratification $(\mathcal{W})$ satisfies  Whitney conditions. 

We denote 
$$ \qquad \Sigma_{\mathcal{W}}: = \{ X' : X' \text{ is a connected component of } W_{2i} \setminus W_{2i -2}, 0 \leq i \leq n \},$$
$$\Sigma_{\mathcal{V}}: = \{ X : X \text{ is a connected component of } V_{2i} \setminus V_{2i -2}, 0 \leq i \leq n \}.$$ 
We now prove that if $X \in \Sigma_{\mathcal{V}}$ then $\pi_F(X) \in \Sigma_{\mathcal{W}}$.
 If $X \subseteq S_F \times \{ 0_{\R_p}\}$ then $\pi_{F_ { \vert X}}$ is the identity and thus $X$ belongs to $\Sigma_{\mathcal{W}}$. 
Otherwise, $X \subseteq N_F \setminus (S_F \times \{ 0_{\R^p}\}$. 
Assume that  $X \subseteq V_{2i} \setminus V_{2i-2}$. 
This implies that $X \cap \pi_F^{-1}(W_{2i-2}) = \emptyset$. 
This amounts to say that $\pi_F(X) \cap W_{2i-2} = \emptyset$. 
Thus $\pi_F(X) \subseteq W_{2i} \setminus W_{2i-2}$. 
Therefore, to show that $\pi_F(X) \in \Sigma_{\mathcal{W}}$, 
we have to check that $\pi_F(X)$ is open and closed in $W_{2i} \setminus W_{2i-2}$. 
As $\pi_F$ is a local diffeomorphism at any point $x$ of $X$, 
the set  $\pi_F(X)$ is a manifold of dimension $2i$, which is open in $S'_{2i}$. Let us show that it is closed in $S'_{2i}$.  
Take a sequence $y_m \subset  \pi_F(X)$ such that $y_m$ tends to $y \notin \pi_F(X)$. 
Let  $x_m \in X$ be such that $\pi_F(x_m) = y_m$. Since $\pi_F$ is proper, 
$x_m$ does not tend to infinity. 
Taking a subsequence if necessary, we can assume that $x_m$ is convergent. Denote its limit by $x$. 
As $\pi_F(x) = y \notin \pi_F(X)$, then the point $x$ cannot be in $X$ and thus belongs to $V_{2i-2}$ since $X$ is closed in $V_{2i} \setminus V_{2i-2}$. This implies that $y= \pi_F(x)\in W_{2i-2}$, as required. 

Let us consider a pair of strata $(X, Y)$ of the stratification $(\mathcal{V})$ such that $\overline{X} \cap Y \neq \emptyset$ and let us prove that $(X, Y)$ satisfies the Whitney $(b)$ condition. That is clear if $X, Y \subseteq S_F \times \{ 0_{\R^p} \}$. If none of them is included in $S_F \times \{ 0_{\R^p}\}$, then, as $\pi_F$ is a local diffeomorphism and Whiney $(b)$ condition is a ${\mathcal{C}}^1$ invariant, this is also clear. Therefore, we can assume that $X \cap (S_F \times \{ 0_{\R^p}\}) = \emptyset$ and $Y \subseteq S_F \times \{ 0_{\R^p}\}$ (if $X \subseteq S_F \times \{ 0_{\R^p}\}$, then $Y$ meets $S_F \times \{ 0_{\R^p}\}$ at the points of $\overline{X}$ and then $Y \subseteq( S_F \times \{ 0_{\R^p}\})$). Set for simplicity $Y: =Y'\times \{ 0_{\R^p}\}$.

 As $Y$ is  open in $\pi_F^{-1}(Y')$, there exists a subanalytic open subset $U'$ of $N_F$ such that 
$$\overline{U'} \cap \pi_F^{-1}(Y') = Y' \times \{ 0_{\R^p}\}.$$
Let $U'': = h_F^{-1}(\overline{U'} \cap N_F \setminus (S_F \times \{ 0_{\R^p}\}))$. We have 
$$U'' \cap F^{-1}(Y') = \emptyset$$
(see diagram (\ref{dia1})). Consequently, the function distance $d(F(x); Y')$ nowhere vanishes on $U''$. As $U''$ is a closed subset of $\R^{2n}$, by \L ojasiewicz inequalty, multiplying the $\psi_i$'s by a huge power of $\frac{1}{1 + {\vert x \vert}^2}$, we can assume that on $U''$, for every $i$
\begin{equationth} \label{*}
\psi_i(z_m) << d(F(z_m); Y')
\end{equationth}
for any sequence $z_m$ tending to infinity. 

Now, in order to check that Whitney $(b)$ condition holds, we take $x_m \in X$ and $y_m \in Y$ tending to $y \in \overline{Y} \cap \overline{X}$. Assume that $l = \lim \overline{x_my_m} $ and $\tau = \lim T_{x_m}X$ exist, we have to check that $l$ is included in $\tau$. 

For every $m$, $x_m$ belongs to $h_F(U_j)$ for some $j$. Extracting a subsequence if necessary, we may assume that it lies in the same $h_F(U_j)$. On $U_j$, $\pi_F$ is invertible and its inverse is 
$$\hat{\psi}(y) = (y, \tilde{\psi_1}(y), \ldots, \tilde{\psi_p}(y)),$$
where $\tilde{\psi_i}(y) = \psi_i \circ F^{-1}_{\vert U_j}$ (see section \ref{constructionNF}, see also Proposition 2.3 in \cite{Valette}).

Let $x_m = (x'_m; \tilde{\psi}(x'_m))$ and $y_m = (y'_m, 0_{\R^{p}})$, where $x'_m = \pi_F(x_m)$ and $y'_m= \pi_F(y_m)$, then $x_m - y_m = (x'_m - y'_m, \tilde{\psi}(x'_m)).$ We claim that 
\begin{equationth} \label{**}
\tilde{\psi}(x'_m) << \vert x'_m - y'_m \vert.
\end{equationth}

If $z_m = F^{-1}(x'_m)$ then $F(z_m) = x'_m$, so that by (\ref{*}), we have
$$\tilde{\psi_i}({x'}_m) << d({x'}_m; Y') \leq \vert {x'}_m - {y'}_m \vert,$$
showing (\ref{**}). 

On one hand, the sets  $\pi_F(X)$ and $\pi_F(Y)$ belong to $\Sigma_{\mathcal{W}}$, they satisfy the Whitney $(b)$ condition. As a matter of fact
$$\lim \frac{x'_m - y'_m}{\vert x'_m - y'_m \vert} = l' \subseteq \tau' = \lim T_{x'_m} \pi_F(X)$$
(extracting a sequence if necessary, we may assume $\frac{x'_m - y'_m}{\vert x'_m - y'_m \vert}$ is convergent).
We have 
 $$\frac{x_m - y_m}{\vert x_m - y_m \vert} = \frac{(x'_m - y'_m, \tilde{\psi}(x_m))}{\vert x_m - y_m \vert} \to (l',0) = l.$$
On the other hand, observe that 
$$d_x \pi_F^{-1} = (Id, \partial_x \tilde{\psi_1}, \dots, \partial_x \tilde{\psi_p}).$$
Multypling $\psi$ by a huge power of $\frac{1}{1+ {\vert x \vert}^2}$, we can assume that the first order partial derivatives of $\tilde{\psi}$ at $x'_m$ tend to zero  as $m$ goes to infinity. Then $T_{x_m}X$ tends to $\tau = \lim T_{x_m} \pi_F(X) \times \{ 0_{\R^p}\} = \tau' \times \{ 0_{\R^p}\}$. But since  $l' \in \tau'$, $l = (l',0) \in \tau = \tau' \times \{ 0_{\R^{p}}\}$.
\end{proof}
\goodbreak

We now generalize Theorem \ref{valette4}. Firstly we notice that a polynomial map $F_i : \C^n \to \C$ can be written
 $$F_i = \Sigma_j{F_i}_j$$
where $F_{ij}$ is the part of degree $d_j$ in $F_i$. Let $d_k$ the highest degree in $F_i$, the leading form $\hat F_i$ of $F_i$
is defined as
$$\hat{F_i} : = {F_i}_{k}.$$ 

\begin{theorem} \label{valettethuy2}
{\it Let $F : \C^n \rightarrow \C^n$ be a polynomial mapping with nowhere vanishing Jacobian. 
If {$\rang_{\C} {(D \hat {F_i})}_{i=1,\ldots,n} > n-2$}, where $\hat {F_i}$ is the leading form of $F_i$,  then the following conditions are equivalent:

\begin{enumerate}
\item[(1)] $F$ is non proper,
\item[(2)] $H_2(N_F) \neq 0,$
\item[(3)] $IH_2^{\overline{p}} (N_F) \neq 0$ for any (or some) perversity $\overline{p},$
\item[(4)] $IH_{2n-2, BM}^{\overline{p}} (N_F) \neq 0$, for any (or some) perversity $\overline{p}.$
\end{enumerate}
}
\end{theorem}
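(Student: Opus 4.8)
The plan is to reduce Theorem \ref{valettethuy2} to Theorem \ref{valette4} via a two-part argument: first establish that, under the rank hypothesis, $N_F$ is a genuine (even-dimensional, Whitney-stratified, hence locally topologically trivial) pseudomanifold of real dimension $2n$, and second, translate non-properness into the nonvanishing of the appropriate (intersection) homology in the "critical" degree. The rank hypothesis $\rang_{\C}(D\hat F_i)_{i} > n-2$ is what forces the Jelonek set $S_F$ to have complex codimension at least $2$ in $\C^n$: indeed, Jelonek's Theorem \ref{Jelonek1} gives that $S_F$ is either empty or pure of complex dimension $n-1$, but the leading-form condition rules out the possibility that $F$ degenerates along a hypersurface direction at infinity, so that actually $\dim_{\C} S_F \le n-2$, i.e. $\dim_{\R}(S_F) \le 2n-4$. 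Combined with Sard's theorem ($\dim_{\C} K_0(F) \le n-2$ when $F$ has nonvanishing Jacobian, hence $K_0(F)=\emptyset$ here, or more precisely $S_F\cup K_0(F)$ has real codimension $\ge 4$), Proposition \ref{valette2} then says $\mathrm{Sing}(N_F) \subset (S_F\cup K_0(F))\times\{0\}$ has real codimension $\ge 4 \ge 2$ in $N_F$, so $N_F$ is a pseudomanifold; Theorem \ref{valettethuy1} supplies the Whitney stratification with only even-dimensional strata.

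Next I would set up the (co)homological computation. By Theorem \ref{valettethuy1}, $N_F$ is stratified with strata only in even real dimensions, and $h_F : M_F \to N_F \setminus (\mathrm{Sing})$ is a bi-Lipschitz homeomorphism onto the regular part; moreover the diagram (\ref{dia1}) exhibits $\pi_F : N_F \setminus (\mathrm{Sing}) \to \R^{2n}\setminus K_0(F)$ as a covering map (a local diffeomorphism, proper over each stratum). The key geometric input is: $F$ is proper if and only if this covering is trivial in a suitable sense, equivalently $N_F$ deformation retracts onto a point / is contractible in the relevant range, equivalently $N_F\setminus \mathrm{Sing}(N_F)$ is already closed in $\R^\nu$. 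When $F$ is proper, $S_F=\emptyset$ so $N_F = h_F(M_F) \cong M_F$, which (since $F$ has nonvanishing Jacobian and is proper, hence a covering of $\C^n$, hence an isomorphism) is contractible, giving $H_2(N_F)=0$ and all intersection homology trivial; this is the easy direction $(1)^c\Rightarrow(2)^c,(3)^c,(4)^c$. For the converse I would argue, following \cite{Valette}, that if $F$ is non-proper then $S_F\ne\emptyset$ is a nonempty $\C$-uniruled variety of dimension $n-2$, and by carefully analyzing the link of a stratum of $N_F$ lying over a generic point of $S_F$ one produces a nontrivial $2$-cycle (a "vanishing sphere" coming from the gluing of sheets of the covering over a small loop around $S_F$, exactly as in Example \ref{exThuy1} where the two sheets $(N_F)_1,(N_F)_2$ are glued along $S_F\times\{0\}$). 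Since $S_F$ has real codimension $4$ in $N_F$, this $2$-sphere is automatically $(\bar p,2)$-allowable for every perversity $\bar p$ (the allowability condition $\dim(Y\cap X_{m-k})\le i-k+p_k$ is satisfied with room to spare), so it gives a nonzero class in $IH_2^{\bar p}(N_F)$ for all $\bar p$ simultaneously, and its image in ordinary homology $H_2(N_F)$ is also nonzero. The equivalence $(3)\Leftrightarrow(4)$ follows from generalized Poincaré duality (\ref{eq poincare duality}) for Borel–Moore intersection homology, applicable precisely because $N_F$ is now a $2n$-dimensional stratified pseudomanifold: $IH_2^{\bar p}(N_F)\simeq IH_{2n-2,BM}^{\bar q}(N_F)$ with $\bar q$ complementary to $\bar p$, and as $\bar p$ ranges over all perversities so does $\bar q$.

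I expect the main obstacle to be the converse direction of the core equivalence, namely showing that non-properness \emph{forces} a nonzero $2$-cycle in the higher-dimensional setting $n>2$ — in dimension $2$ one can draw pictures and $N_F$ is a surface, but for $n>2$ one must understand the local topology of $N_F$ transverse to the codimension-$4$ singular locus and verify that the monodromy of the covering $\pi_F$ around $S_F$ is nontrivial whenever $F$ fails to be proper. The right tool is the local topological triviality from Theorem \ref{valettethuy1}: near a generic point of a stratum $Y' \subset S_F$ of real dimension $2j$, $N_F$ looks like $(0,1)^{2j}\times cL$ where $L$ is the link, and one must show $H_1(L)\ne 0$ (equivalently the cone point contributes a relative $2$-cycle) exactly when the sheets of the covering over a transverse loop get identified at $Y'$ — which is the content of the properness failure $\pi_F(u)\in S_{F|M_{2i}}$ that appears in the proof of Theorem \ref{valettethuy1}. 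A secondary, more bookkeeping, obstacle is verifying the allowability inequalities carefully enough that the same explicit cycle works for \emph{every} perversity, so that the "for any (or some)" phrasing in (3) and (4) is justified; but since $\mathrm{codim}_{\R}\mathrm{Sing}(N_F)\ge 4$, the inequality $\dim(|\xi|\cap X_{m-k})\le i-k+p_k$ is never binding for a $2$-cycle, so this is routine once the codimension count is in hand.
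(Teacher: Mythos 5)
There is a genuine and central gap in your proposal: you misread the role of the rank hypothesis. Jelonek's Theorem~\ref{Jelonek1} gives that $S_F$ is \emph{either empty or pure of complex dimension $n-1$}; there is no intermediate possibility, so ``$\dim_\C S_F \le n-2$'' can only mean $S_F = \emptyset$, i.e.\ $F$ proper, which would render the theorem vacuous. In fact $S_F$, when nonempty, always has real codimension exactly $2$ in $\R^{2n}$, and the paper verifies allowability with a tight count $\dim(|c|\cap (S_F\times\{0\})) = 0 \le 2-2+p_2$ (equality when $p_2=0$), not with the ``room to spare'' you claim from a codimension-$4$ singular locus. The condition $\rang_\C (D\hat F_i) > n-2$ is never used to shrink $S_F$; it is used at the very end of the hard implication to bound the dimension of the \emph{zero locus $V$ of the leading forms} $\hat F_1,\dots,\hat F_n$: it forces $\dim_\R V \le 2$, hence $V\cap \S^{2n-1}(0,1)$ is at most a finite union of circles.

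This matters because the paper's proof of $(1)\Rightarrow(2),(3)$ is not a monodromy or link computation near $S_F$ as you sketch. It builds a $2$-cycle $\sigma$ from a Puiseux arc $\gamma$ going to infinity with $F(\gamma)\to x_0 \in S_F$, then proves $\sigma$ cannot bound in $N_F$ by a contradiction at infinity: if a $3$-chain $\tau$ bounded $\sigma$, one considers the ``tangent cone at infinity'' $C_\infty$ of the preimages $A,B$ of $|\sigma|,|\tau|$ in $M_F$; Lemma~\ref{lemmathuy1} forces $C_\infty(A),C_\infty(B) \subset V\cap \S^{2n-1}(0,1)$, and a limiting family-of-chains argument (Lemma~\ref{valette5}, applied to the deformation retraction onto $\S^1\cdot a$) shows the generator $\S^1\cdot a$ of $H_1(\S^1\cdot a)$ would have to bound inside $V\cap\S^{2n-1}(0,1)$. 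The rank hypothesis is exactly what makes this impossible, since a $1$-cycle cannot bound inside a set of real dimension $\le 1$. Your proposal replaces this with an unsubstantiated appeal to ``nontrivial monodromy of the covering around $S_F$'' which you yourself flag as the main obstacle — and indeed it is precisely the step your outline does not carry out. The easy direction (proper $\Rightarrow$ $N_F\cong\R^{2n}$ $\Rightarrow$ all groups vanish) and the Poincaré-duality step $(3)\Leftrightarrow(4)$ are correct and match the paper, but the core of the theorem is missing.
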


Before proving this theorem, we give here some necessary definitions and lemmas.

\begin{definition}
A {\bf semi-algebraic family of sets} (parametrized by $\R$) 
{\rm is a semi-algebraic set $A \subset \R^n \times \R$, the last variable being considered as parameter.}
\end{definition}
\begin{remark}
{ \rm The semi-algebraic set $A \subset \R^n \times \R$ will be considered as a family parametrized by $t \in \R$. We write $A_t$, for ``the fiber of $A$ at $t$'', {\it i.e.}:
$$ A_t : = \{ x \in \R^n : (x, t) \in A\}. $$}
\end{remark}
\begin{lemma} [\cite{Valette}] \label{valette5} 
Let $\beta$ be a $j$-cycle and let $A \subset \R^n \times \R$ be a compact semi-algebraic family of sets with $\vert \beta \vert \subset A_t$ for any $t$. Assume that  $\vert \beta \vert$ bounds a $(j+1)$-chain in each $A_t$, $t >0$ small enough. Then $ \beta $ bounds a chain in $A_0$.
\end{lemma}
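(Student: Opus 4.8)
The plan is to deduce the statement from the elementary topological fact that a cycle which is null-homologous in a subspace $A_t$ is a fortiori null-homologous in the total space $A|_{[0,\delta)}$ of the family, combined with the fact that this total space retracts onto the special fibre $A_0$; thus the ``family'' statement is reduced to the injectivity of an inclusion-induced map on homology.

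First I fix $\delta_0>0$ inside the parameter range, set $X:=A\cap(\R^n\times[0,\delta_0])$ --- a compact semi-algebraic set with $X\cap(\R^n\times\{0\})=A_0$ --- and write $\pi:X\to[0,\delta_0]$ for the projection onto the parameter. By the semi-algebraic triangulation theorem (see e.g.\ \cite{bcr}) there are a finite simplicial complex $K$ and a semi-algebraic homeomorphism $|K|\to X$ carrying $A_0$ onto a subcomplex; after one barycentric subdivision I may assume this subcomplex is full. Let $N$ be its closed simplicial neighbourhood (the union of the closed simplices meeting $A_0$) and $U$ the associated open star. Then $U$ is open in $X$, one has $A_0\subset U\subset N$, the set $N$ is compact, and the classical theory of regular neighbourhoods of (full) subcomplexes provides a piecewise-linear --- hence semi-algebraic --- strong deformation retraction $r:N\to A_0$; in particular, writing $\iota_0:A_0\hookrightarrow N$ for the inclusion, $r\circ\iota_0=\mathrm{id}_{A_0}$.

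Next I localise in the parameter. Since $X\setminus U$ is closed in the compact set $X$ it is compact, and it is disjoint from $A_0=\pi^{-1}(0)$; hence $\pi(X\setminus U)$ is a compact subset of $(0,\delta_0]$, so $\pi(X\setminus U)\subset[\delta_1,\delta_0]$ for some $\delta_1\in(0,\delta_0]$, which means $A_t=X_t\subset U\subset N$ for every $t\in[0,\delta_1)$. Now choose $t$ with $0<t<\min(\delta_1,\delta)$. By hypothesis there is a $(j+1)$-chain $\gamma_t$ with $|\gamma_t|\subset A_t$ and $\partial\gamma_t=\beta$; since $A_t\subset N$, this exhibits $\beta$ as a boundary in $N$, so $[\beta]=0$ in $H_j(N)$. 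On the other hand $|\beta|\subset A_0\subset N$, so $\beta$ also represents a class $[\beta]_{A_0}\in H_j(A_0)$, and $\iota_{0*}[\beta]_{A_0}$ is exactly that vanishing class $[\beta]=0$ in $H_j(N)$. As $r_*\circ\iota_{0*}=\mathrm{id}$, the map $\iota_{0*}$ is injective, whence $[\beta]_{A_0}=0$; that is, $\beta$ bounds a $(j+1)$-chain whose support lies in $A_0$, as claimed.

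The only non-formal ingredient is the existence of a neighbourhood of the subcomplex $A_0$ in the triangulated compact set $X$ that admits a semi-algebraic deformation retraction onto $A_0$; this is the classical regular-neighbourhood statement, and it is the one place where compactness of the family $A$ is used (to get a finite triangulation and to extract $\delta_1$). Everything else --- compatibility of $\partial$ with push-forward of semi-algebraic chains, and the fact that all homology groups here are those of the semi-algebraic singular complex of Section~\ref{section_notations} --- is routine, so the actual write-up should be short. (An alternative route would trivialise $A$ over $(0,\delta)$ by Hardt's semi-algebraic triviality theorem, compatibly with $|\beta|$, and then pass to a limit of the chains $\gamma_t$ as $t\to 0$; I prefer the retraction argument above since it avoids any discussion of convergence of families of chains.)
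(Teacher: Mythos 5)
The paper does not actually prove this lemma --- it is imported from \cite{Valette} --- so your argument has to stand on its own. Its strategy (triangulate the total space $X=A\cap(\R^n\times[0,\delta_0])$ of the family compatibly with the special fibre, take a regular neighbourhood $N$ of that fibre, observe that $A_t\times\{t\}\subset U\subset N$ for all $t$ small, and conclude via the split injection $H_j(A_0)\hookrightarrow H_j(N)$ coming from the retraction $r:N\to A_0$) is sound, and it is essentially the same ``retract a neighbourhood of the special fibre'' argument as the original one; your extraction of $\delta_1$ from the compact set $\pi(X\setminus U)\subset(0,\delta_0]$ is exactly right, and compactness of $A$ is used where you say it is.

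There is one step you pass over silently, and it is precisely where the hypothesis that $|\beta|\subset A_t$ for \emph{every} $t$ (not just $t=0$ and the small $t>0$ you chose) is needed. The cycle $\beta$ lives in $\R^n$, whereas $N$ lives in $\R^n\times\R$, and the fibre $A_t$ sits inside $N$ only as $A_t\times\{t\}$. Hence the chain $\gamma_t$ exhibits as a boundary in $N$ the cycle $\beta_t:=\beta\times\{t\}$, while the class to which you apply the injectivity of $\iota_{0*}$ is that of $\beta_0:=\beta\times\{0\}$; these are different singular cycles of $N$, and the sentence ``this exhibits $\beta$ as a boundary in $N$'' conflates them. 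The gap closes in one line: the prism chain on $|\beta|\times[0,t]$ has support in $U\subset N$, because each slice $|\beta|\times\{s\}$ lies in $A_s\times\{s\}\subset U$ for $s<\delta_1$ --- this is where the ``all fibres'' hypothesis enters --- and its boundary is $\beta_t-\beta_0$ (as $\partial\beta=0$), so $[\beta_0]=[\beta_t]=0$ in $H_j(N)$. With that addition (or, equivalently, by pushing both $\gamma_t$ and the prism forward by $r_\#$ into $A_0$), your proof is complete.
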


\begin{definition}[\cite{Valette}] 

{\rm Given a subset $X \subset \R^{n}$, we define the {\bf ``tangent cone at infinity''}, called ``{\bf contour apparent \`a l'infini}''
in \cite{Thuy1} by: 
$$ C_{\infty}(X):=\{\lambda \in \S^{n-1}(0,1) \text{ such that } \exists \varphi : (t_0, t_0 + \varepsilon] \rightarrow X \text { semi-algebraic,}$$
$$ \qquad \qquad \qquad \underset{t \rightarrow t_0}{\lim} \varphi (t) = \infty, \underset{t \rightarrow t_0}{\lim}\frac{\varphi(t)}{\vert \varphi(t) \vert} = \lambda \}. $$
}
\end{definition}

\begin{lemma} [\cite{Thuy1}] \label{lemmathuy1}
{\it Let $F=(F_1, \ldots , F_n) : \R^n \to \R^n$ be a polynomial mapping and $V$ the zero locus of   $\hat{F}: = (\hat{F_1}, \ldots, \hat{F_n})$, where $\hat{F_i}$ is the leading form of  $F_i$ for $i = 1, \ldots , n$. If $X$ is a subset of $\R^n$ such that $F(X)$ is bounded, then $C_\infty(X)$ is a subset of $\S^{n-1}(0,1) \cap V.$}
\end{lemma}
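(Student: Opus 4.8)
The plan is to argue directly from the definition of $C_\infty(X)$. I would start by fixing $\lambda\in C_\infty(X)$ and taking the semi-algebraic curve $\varphi:(t_0,t_0+\varepsilon]\to X$ with $\lim_{t\to t_0}\varphi(t)=\infty$ and $\lim_{t\to t_0}\varphi(t)/|\varphi(t)|=\lambda$ provided by the definition. Since $\lambda$ is a limit of points of the closed set $\S^{n-1}(0,1)$, it lies in $\S^{n-1}(0,1)$ automatically, so everything reduces to showing $\hat{F_i}(\lambda)=0$ for every $i=1,\dots,n$, i.e.\ $\lambda\in V$. (Here I tacitly assume each $F_i$ is non-constant, so that $\hat{F_i}$ is a form of positive degree; this is the only case of interest --- in particular it holds whenever $F$ has nowhere vanishing Jacobian, since a constant component would give a zero row in the Jacobian matrix --- and if some $F_i$ were a nonzero constant the statement would have to be read as $C_\infty(X)=\emptyset$.)

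Fix $i$, set $d_i=\deg F_i\ge 1$, and decompose $F_i=\sum_{k=0}^{d_i}F_{i,k}$ into its homogeneous components, $F_{i,k}$ of degree $k$, with $F_{i,d_i}=\hat{F_i}$. Writing $r(t):=|\varphi(t)|$ and $u(t):=\varphi(t)/r(t)$, we have $r(t)\to+\infty$ and $u(t)\to\lambda$ with $u(t)\in\S^{n-1}(0,1)$ for all $t$, and homogeneity gives
$$F_i(\varphi(t))=F_i\bigl(r(t)u(t)\bigr)=\sum_{k=0}^{d_i}r(t)^k\,F_{i,k}(u(t)).$$
Since $F(X)$ is bounded, there is $M$ with $|F_i(\varphi(t))|\le M$ for all $t$; dividing the identity by $r(t)^{d_i}$ I get
$$\Bigl|\hat{F_i}(u(t))+\sum_{k=0}^{d_i-1}r(t)^{k-d_i}\,F_{i,k}(u(t))\Bigr|\le \frac{M}{r(t)^{d_i}}.$$
Letting $t\to t_0$: each $F_{i,k}(u(t))$ converges to the finite value $F_{i,k}(\lambda)$ (the $F_{i,k}$ being continuous and $u(t)$ lying on the compact unit sphere), while $r(t)^{k-d_i}\to 0$ for $k<d_i$ and $M/r(t)^{d_i}\to 0$; hence $\hat{F_i}(u(t))\to 0$, and by continuity also $\hat{F_i}(u(t))\to\hat{F_i}(\lambda)$, so $\hat{F_i}(\lambda)=0$.

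Since $i$ was arbitrary, $\hat{F}(\lambda)=0$, that is $\lambda\in V$; together with $\lambda\in\S^{n-1}(0,1)$ this yields $\lambda\in\S^{n-1}(0,1)\cap V$, and as $\lambda\in C_\infty(X)$ was arbitrary, $C_\infty(X)\subseteq\S^{n-1}(0,1)\cap V$. I do not expect any real obstacle: the proof is just the elementary principle that along a path tending to infinity the leading form of a polynomial that stays bounded must vanish in the limiting direction, and the only point calling for a comment is the degenerate case of constant components noted above.
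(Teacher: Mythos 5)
Your proof is correct, and it rests on the same underlying principle as the paper's: along a curve tending to infinity in the direction $\lambda$, the value $F_i(\varphi(t))$ is dominated by the top homogeneous part, so boundedness forces $\hat{F_i}(\lambda)=0$. The implementation differs slightly. The paper expands the semi-algebraic curve as a Puiseux-type arc $\gamma(t)=\lambda t^{m}+\ldots$ and reads off that the leading term of $F_i(\gamma(t))$ is $\hat{F_i}(\lambda)t^{md_i}$, concluding from boundedness that this coefficient vanishes; this uses the semi-algebraicity of the curve to get the expansion. You instead write $\varphi(t)=r(t)u(t)$, divide the homogeneous decomposition by $r(t)^{d_i}$, and pass to the limit directly --- a squeeze argument that never invokes the Puiseux expansion and in fact uses nothing about $\varphi$ beyond $r(t)\to\infty$ and $u(t)\to\lambda$. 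Your version is therefore marginally more elementary and more general. Your side remark about constant components is also a fair point: if some $F_i$ is a nonzero constant then $\hat{F_i}$ is a nonvanishing degree-zero form and the statement only makes sense in the degenerate reading you give; the paper's proof silently makes the same assumption ($d_i\geq 1$) when it asserts that $\hat{F_i}(\lambda)t^{md_i}$ would blow up unless $\hat{F_i}(\lambda)=0$.
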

\begin{proof}[Proof]
By definition, $C_\infty(X)$ is included in $\S^{n-1}(0,1)$. We prove now that  $C_\infty(X)$ is included in $V$. In fact, given $\lambda \in C_\infty(X)$, then there exists a semi-algebraic curve $ \gamma : \, (t_0, t_0 + \varepsilon] \rightarrow X $  such that  $\underset{t \rightarrow t_0}{\lim} \gamma (t) = \infty$ and $\underset{t \rightarrow t_0}{\lim}\frac{\gamma(t)}{\vert \gamma(t) \vert} = \lambda$. Then $\gamma(t)$ can be written as  
$\gamma(t) = \lambda t^m + \ldots$ and $\hat{F_i} = \hat{F_i}(\lambda)t^{md_i} + \ldots $ where $d_i$ is the homogeneous degree of $\hat{F_i}$.  
Since $F(X)$ is bounded, then $F_i$ cannot tend to infinity when $t$ tends to $t_0$, hence 
$ \hat{F_i}(\lambda) =0$  for all $i = 1, \ldots , n$.
\end{proof}

Let us prove now Theorem \ref{valettethuy2}. We use the idea and technique 
of the second and third authors in \cite{Valette}.

\begin{proof} [Proof of the Theorem \ref{valettethuy2}] 

 $(4) \Leftrightarrow (3)$ : By Goresky-MacPherson Poincar\'e  duality Theorem, we have
 $$IH_2^{\overline{p}}(N_F) = IH_{2n-2,BM}^{\overline{q}}(N_F),$$
where $\overline{q}$ is the complementary perversity of $\overline{p}$. Since  $IH_2^{\overline{p}}(N_F) \neq 0$ for all perversities  $\overline{p}$, then  $IH_{2n-2}^{\overline{q}}(N_F) \neq 0$, for all perversities $\overline{q}.$

$(3) \Rightarrow (1), (3) \Rightarrow (2) $ : If $F$ is proper then the sets   $S_F$ and $K_0(F)$ are empty. 
So $Sing(N_F)$ is empty and  $N_F$ is homeomorphic to  $\R^{2n}$. 
It implies  that $H_2(N_F) = 0$ and $IH_2^{\overline{p}}(N_F) = 0.$

 $(1) \Rightarrow (2), (1) \Rightarrow (3)$ : Assume that $F$ is not proper. That means that there exists a complex Puiseux arc  
$\gamma : D(0, \eta) \rightarrow \R^{2n}$, $ \gamma = u z^{\alpha} + \ldots, $
(with $\alpha$ negative integer and $u$
 is an unit vector of $\R^{2n}$) tending to infinity in such a way that $F(\gamma)$ converges to a generic point $x_0 \in S_F$. Let $\delta$ be an oriented triangle in $\R^{2n}$ whose barycenter is the origin. Then, as the mapping $h_F \circ \gamma$ (where $h_F = (F, \psi_1, \ldots,\psi_p))$ extends continuously at $0$, it provides a singular $2$-simplex in $N_F$ that we will denote by $c$.

\noindent Since $\codim_{\R} S_F = 2$, then
$$ 0 = \dim_{\R} \{ x_0 \} = \dim_{\R} ( (S_F \times \{ 0_{\R^{p}} \}) \cap \vert c \vert) \leq 2 -2 + p_2,$$
because $p_2 = 0$ for any perversity $\overline{p}$. So the simplex $c$ is $(\overline{0},2)$-allowable for any perversity $\overline{p}$. 

\noindent The support of $\partial c$ lies in $N_F \setminus S_F \times \{0_{\R^p}\}$. By definition of $N_F$, we have 
$N_F \setminus S_F \times \{ 0_{\R^{p}} \} \simeq \R^{2n} $. Since $H_1(\R^{2n}) = 0$, the chain $\partial c $ bounds 
a singular chain  
$e \in C^2(N_F \setminus S_F \times \{0_{\R^{p}}\})$. 
So $\sigma = c- e$ is a $(\overline{p},2)$-allowable cycle of  $N_F$. 

We claim that $\sigma$ may not bound a  $3$-chain in $N_F$.
Assume otherwise, {\it i.e.} assume that there is a  chain $\tau \in C_3(N_F)$, satisfying $\partial \tau=\sigma$. Let 
$$A:= h_F^{-1}(\vert \sigma \vert \cap (N_F \setminus (S_F \times \{0_{\R^{p}}\}))),$$ 
$$B:= h_F^{-1}(\vert \tau \vert \cap (N_F \setminus (S_F \times \{0_{\R^{p}}\}))).$$ 

By definition, $C_\infty(A)$ and $C_{\infty}(B)$ are subsets of $\S^{2n-1}(0,1)$. 
 Observe that, in a neighborhood of infinity,  $A$ coincides with the support of the  Puiseux arc $\gamma$.  The set $C_\infty(A)$ is equal to $\S^1.a$ (denoting the orbit of $a \in \C^n$ under the action of $\S^1$ on $\C^n$, $(e^{i\eta},z) \mapsto  e^{i\eta}z$). Let $V$ be the zero locus of the leading forms  $\hat{F}: = (\hat{F_1}, \ldots, \hat {F_n})$.  Since $F(A)$ and $F(B)$ are bounded, by  Lemma \ref{lemmathuy1}, $C_\infty(A)$ and $C_{\infty}(B)$ are subsets of $V \cap \S^{2n -1}(0,1).$

For $R$ large enough, the sphere $\S^{2n-1}(0, R)$ with center 0 and radius $R$ in $\R^{2n}$ is transverse to $A$ and $B$ (at regular points). Let 
$$\sigma_R : = \S^{2n-1}(0 , R) \cap A, \qquad \tau_R : = \S^{2n-1}(0, R) \cap B.$$
After a triangulation, the intersection $\sigma_R$ is a chain bounding the chain $\tau_R$. 

Consider a semi-algebraic strong deformation retraction 
$\rho : W \times [0;1] \rightarrow \S^1.a$, where $W$ is a neighborhood of  $\S^1.a$ in $\S^{2n-1}(0,1)$ onto $\S^1.a$. 

Considering $R$ as a parameter, we have the following semi-algebraic families of  chains: 

1) $\tilde{\sigma _R} :=\frac{\sigma _R}{R},$
for $R$ large enough, then $\tilde{\sigma _R}$ is contained in $W$,

2)  $\sigma{'}_{R} = \rho_1(\tilde{\sigma_R})$, where $\rho_1(x) : = \rho(x, 1), \qquad x \in W$,

3) $\theta_{R} = \rho(\tilde{\sigma_R})$, we have 
 $\partial \theta_R  = \sigma'_R - \tilde{\sigma_R} ,$

4) $\theta{'}_{R} = \tau_R + \theta_R$, we have 
 $\partial \theta'_R  =  \sigma'_R.$

As, near infinity, $\sigma_R$ coincides with the intersection of the support of the arc $\gamma$ with $\S^{2n -1}(0, R)$, for $R$ large enough the class of $\sigma'_R$ in $\S^1.a$ is nonzero.

Let $r = 1/R$, consider $r$ as a parameter, and let $\{ \tilde{\sigma_r} \}$, $\{ \sigma'_r \}$, $\{ \theta_r \}$ as well as $\{ \theta'_r \}$ the corresponding  semi-algebraic families of  chains. 

Denote by $E_r \subset \R^{2n} \times \R$ the closure of $|\theta_r|$, and set $E_0 : = (\R^{2n} \times \{ 0 \}) \cap E$. Since the strong deformation retraction  $\rho$ is the identity on $C_{\infty}(A) \times [0,1]$, we see that  
$$E_0 \subset \rho(C_{\infty}(A) \times [0,1]) = \S^1. a \subset V \cap \S^{2n-1}(0,1).$$

Denote $E'_r \subset \R^{2n} \times \R$ the closure of $|\theta'_r|$, and set  $E'_0 : = (\R^{2n} \times \{ 0 \}) \cap E'$. Since $A$ bounds $B$, so $C_{\infty}(A)$ is contained in $C_{\infty}(B)$. We have 
$$E'_0 \subset E_0 \cup C_{\infty}(B) \subset V \cap \S^{2n-1}(0,1).$$

The class of $\sigma'_r$ in $\S^1.a$ is, up to a product with a nonzero constant, equal to the generator of $\S^1.a$. Therefore, since $\sigma'_r$ bounds the chain $\theta'_r$, the cycle $\S^1.a$ must bound a chain in $|\theta'_r|$ as well. By Lemma \ref{valette5}, this implies that $\S^1.a$ bounds a chain in $E'_0$ which is included in $V \cap \S^{2n-1}(0,1)$.

 The set $V$ is a projective variety which is an union of cones in $\R^{2n}$. Since $\rang_{\C} {(D \hat {F_1})}_{i=1,\ldots,n}  >n-2$ then $ \corang_{\C} {(D \hat {F_1})}_{i=1,\ldots,n} = \dim _{\C} V \leq 1$, so $\dim_{\R} V \leq 2 $ and  $\dim_{\R} V \cap \S^{2n-1}(0,1) \leq 1.$
The cycle $\S^1 . a$ thus bounds a chain in $E'_0\subseteq V \cap \S^{2n-1}(0,1)$, which is a finite union of circles. A contradiction.
\end{proof}

%
%
%
We have the following corollary
\begin{corollary}
{\it Let $F = (F_1, \ldots,  F_n): \C^n \rightarrow \C^n$ be a polynomial mapping with nowhere vanishing
Jacobian and such that $\rang_{\C} {(D \hat {F_i})}_{i=1,\ldots,n} > n-2$, where $\hat {F_i}$ is the leading form of $F_i$. The following conditions are equivalent:
\begin{enumerate}
\item[1)] $F$ is nonproper,
\item[(2)] $H_{\infty}^2(\Reg (N_F^R)) \neq 0,$
\item[(3)] $H_{\infty}^{n-2}(\Reg (N_F^R)) \neq 0.$
\end{enumerate}
where $N_F^R:=N_F\cap \bar B (0,R)$, which $R$ is large enough.}
\end{corollary}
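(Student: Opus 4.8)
The plan is to deduce the corollary from Theorem \ref{valettethuy2} together with the $\mathcal{L}^\infty$-cohomology comparison of Theorem \ref{thm_intro_linfty}. First I would observe that $N_F^R := N_F \cap \bar{\B}(0,R)$ is, for $R$ large enough, a compact subanalytic pseudomanifold possibly with boundary: indeed $Sing(N_F) \subset (S_F \cup K_0(F))\times\{0_{\R^p}\}$ is an algebraic set, so for $R$ large the sphere $\S(0,R)$ is transverse to $N_F$ and to its singular locus, and $N_F^R$ carries the restricted filtration from Theorem \ref{valettethuy1} (intersected with $\bar{\B}(0,R)$), which is a Whitney — hence locally topologically trivial — stratification with only even-dimensional strata. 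In particular $\Reg(N_F^R) = \Reg(N_F) \cap (\bar{\B}(0,R)\setminus \text{boundary issues})$ is a smooth manifold, and $N_F^R$ is a deformation retract of $N_F$ (the singular set lies in a bounded region, so a radial retraction carries all the topology of $N_F$ into a large ball), so $IH_*^{\bar p}(N_F^R) \cong IH_*^{\bar p}(N_F)$ and similarly for Borel--Moore homology; by Proposition \ref{proinvariance} this does not depend on the stratification.

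Next I would apply Theorem \ref{thm_intro_linfty} to $X = N_F^R$: for every $j$ one has $H_\infty^j(\Reg(N_F^R)) \simeq IH_j^{\bar t}(N_F^R)$, the isomorphism being induced by integration on allowable simplices. Combined with the retraction above this gives $H_\infty^j(\Reg(N_F^R)) \simeq IH_j^{\bar t}(N_F)$. Taking $j = 2$ and invoking Theorem \ref{valettethuy2}, the condition $\rang_\C (D\hat F_i)_{i=1,\dots,n} > n-2$ ensures that $IH_2^{\bar t}(N_F)\neq 0$ is equivalent to the nonproperness of $F$ (statement (3) of the theorem holds for \emph{any} perversity, in particular $\bar t$), which gives the equivalence $(1)\Leftrightarrow(2)$.

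For the equivalence $(1)\Leftrightarrow(3)$, i.e. $H_\infty^{n-2}(\Reg(N_F^R))\neq 0$, I would use Theorem \ref{thm_intro_linfty} again in degree $j = n-2$ to get $H_\infty^{n-2}(\Reg(N_F^R)) \simeq IH_{n-2}^{\bar t}(N_F)$, and then relate $IH_{n-2}^{\bar t}(N_F)$ to $IH_{2n-2,BM}^{\bar p}(N_F)$ or to $IH_2^{\bar p}(N_F)$ through Goresky--MacPherson duality. Here one must be slightly careful: $N_F$ has real dimension $2n$, so Poincaré duality on the compact pseudomanifold $N_F^R$ (with boundary, using the relative version) reads $IH_k^{\bar p}(N_F^R) \simeq IH_{2n-k,BM}^{\bar q}(N_F^R)$ with $\bar q$ complementary to $\bar p$. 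With $k = n-2$ this matches degree $2n - (n-2) = n+2$, not obviously one of the degrees in Theorem \ref{valettethuy2}; so instead I expect the intended reading is that condition (3) should track the \emph{lower} middle-dimensional class and one reduces to (4) of Theorem \ref{valettethuy2}, using that $N_F$ retracts to a compact set of real dimension controlled by the stratification so that only degrees $\le 2$ and their duals $\ge 2n-2$ can be nonzero. Concretely I would argue that the Whitney stratification of Theorem \ref{valettethuy1} with strata only in even dimensions, together with the fact (from the proof of Theorem \ref{valettethuy2}) that all the interesting intersection homology is concentrated in degree $2$ (and by duality $2n-2$), forces $IH_j^{\bar t}(N_F) = 0$ for $2 < j < 2n-2$ and shows that $IH_{n-2}^{\bar t}$ vanishes unless $n = 4$ or the class is forced by the degree-$2$ generator — so the honest statement is that $H_\infty^{n-2}(\Reg(N_F^R))$ detects the same obstruction.

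The main obstacle will be this last bookkeeping with perversities and dimensions: making precise \emph{why} $H_\infty^{n-2}$ captures nonproperness requires either an explicit computation of $IH_*^{\bar t}(N_F)$ in all degrees (showing it is concentrated in $0$, $2$, $2n-2$, $2n$, which should follow from the local structure $cL$ along the strata of $S_F\cup K_0(F)$ since $\dim_\R(S_F\cup K_0(F)) = 2n-2$ and the links $L$ are finite unions of spheres under the rank hypothesis), or a direct duality argument. I would handle it by first establishing, via the cone formula for intersection homology applied to the locally topologically trivial stratification, that $IH_j^{\bar p}(N_F) = 0$ for all $j \notin \{0,2,2n-2,2n\}$ and all $\bar p$; then $n-2 \in \{2, 2n-2\}$ forces $n = 4$ — so strictly the corollary as stated is meaningful exactly when $n=4$, or (more charitably) conditions (2) and (3) are two incarnations, via the two halves of Poincaré duality and the $\mathcal{L}^\infty$ isomorphism, of the single nonvanishing obstruction guaranteed by Theorem \ref{valettethuy2}, and that is what the proof records.
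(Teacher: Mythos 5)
Your overall strategy is the right one and is what the paper intends (the paper itself only says ``similar as the one in \cite{Valette}''): combine Theorem~\ref{thm_intro_linfty}, which gives $H_\infty^j(\Reg(N_F^R))\simeq IH_j^{\bar t}(N_F^R)$, with Theorem~\ref{valettethuy2}, which ties the intersection homology of $N_F$ to nonproperness. Two points deserve caution, though.

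First, the passage from $IH_*^{\bar t}(N_F^R)$ to $IH_*^{\bar t}(N_F)$ (or to the Borel--Moore groups appearing in Theorem~\ref{valettethuy2}(4)) should not be justified by a bare ``deformation retraction carries all the topology''; intersection homology is a \emph{topological} invariant of pseudomanifolds (Proposition~\ref{proinvariance}), not a homotopy invariant, and an arbitrary retraction of $N_F$ onto a ball need not preserve it. What makes the comparison work is the semi-algebraic conic structure at infinity (Hardt-type triviality): for $R$ large, $N_F\setminus\mathring{\B}(0,R)$ is stratified-homeomorphic to $\partial N_F^R\times[R,\infty)$, so the open interior $N_F\cap\B(0,R)$ is stratified-homeomorphic to $N_F$ and $N_F^R$ carries a genuine stratified collar; this is what licenses the identification $IH_*^{\bar t}(N_F^R)\simeq IH_*^{\bar t}(N_F)$ and $IH_*(N_F^R,\partial N_F^R)\simeq IH_{*,BM}(N_F)$. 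You gesture at this but should phrase it as a stratified collar/conic-structure argument rather than a radial retraction.

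Second, you are right to be suspicious of the exponent $n-2$ in item (3): comparison with Theorem~\ref{valettethuy2}(4), which features $IH_{2n-2,BM}^{\bar p}(N_F)$, and with the $n=2$ case of \cite{Valette} (where $2n-2=2$ but $n-2=0$), indicates this is a misprint for $2n-2$. Flagging that mismatch is good. However, the way you then try to rescue the literal $n-2$ --- by asserting that the cone structure along the singular strata forces $IH_j^{\bar t}(N_F)=0$ for all $j\notin\{0,2,2n-2,2n\}$ --- is not supported by anything in the paper and is not true in this generality; Theorem~\ref{valettethuy2} only controls degree $2$ and, by duality, degree $2n-2$, and says nothing about vanishing in intermediate degrees. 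The correct resolution is simply to read (3) as $H_\infty^{2n-2}$, apply Theorem~\ref{thm_intro_linfty} in degree $2n-2$, and use generalized Poincar\'e--Lefschetz duality for the compact pseudomanifold with boundary $N_F^R$ (together with the collar identification with the Borel--Moore groups of $N_F$) to reduce to condition (4) of Theorem~\ref{valettethuy2}. Drop the speculative vanishing claim and replace it with that duality step.
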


The proof is similar as the one  in \cite{Valette}.


\bibliographystyle{plain}

\end{document}